\newtheorem{theo}{Theorem}[section] 
\newtheorem{prop}[theo]{Proposition}  
\newtheorem{lemma}[theo]{Lemma}
\newcounter{mnotecount}[section]
\renewcommand{\themnotecount}{\thesection.\arabic{mnotecount}}
\newcommand{\mnote}[1]%{}%
{\protect{\stepcounter{mnotecount}}$^{\mbox{\footnotesize
$%\!\!\!\!\!\!\,
\bullet$\themnotecount}}$ \marginpar{%\color{red}%
\raggedright\tiny\em
$\!\!\!\!\!\!\,\bullet$\themnotecount: #1} }
\newcommand{\PP}{\mathbb{P}}
\newcommand{\Rho}{\mathrm{P}}
\def\p{\partial}
\def\be{\begin{equation}}
\def\ee{\end{equation}}
\def\bea{\begin{eqnarray}}
\def\eea{\end{eqnarray}}
\def\T{\mbox{Tr}}
\def\ov{\overline}
\newcommand{\intprod}{\;\rule{4pt}{.3pt}\rule{.3pt}{5pt}\;}
\newcommand{\xoo}[3]{\begin{picture}(40,11)
\put(4,1.5){\makebox(0,0){$\times$}}
\put(20,1.5){\makebox(0,0){$\bullet$}}
\put(36,1.5){\makebox(0,0){$\bullet$}}
\put(4,1.5){\line(1,0){32}}
\put(4,8){\makebox(0,0){$\scriptstyle #1$}}
\put(20,8){\makebox(0,0){$\scriptstyle #2$}}
\put(36,8){\makebox(0,0){$\scriptstyle #3$}}
\end{picture}}
\newcommand{\oo}[2]{\begin{picture}(24,11)
\put(4,1.5){\makebox(0,0){$\bullet$}}
\put(20,1.5){\makebox(0,0){$\bullet$}}
\put(4,1.5){\line(1,0){16}}
\put(4,8){\makebox(0,0){$\scriptstyle #1$}}
\put(20,8){\makebox(0,0){$\scriptstyle #2$}}
\end{picture}}
\newcommand{\Aone}[1]{\begin{picture}(8,11)
\put(4,1.5){\makebox(0,0){$\bullet$}}
\put(4,8){\makebox(0,0){$\scriptstyle #1$}}
\end{picture}}
\begin{document}
% \date{August 10, 2014}
\def\thefootnote{}\footnotetext{\phantom{August 10, 2014}}
%%%%%%%%%%%%%%%%%%%%%%%%%%%%%%%%%%%%%%%%
\title{Metrisability of three-dimensional path geometries}
%%%%%%%%%%%%%%%%%%%%%%%%%%%%%%%%%%%%%%%%
\author{Maciej Dunajski}
\address{Department of Applied Mathematics and Theoretical Physics\\ 
University of Cambridge\\ Wilberforce Road, Cambridge CB3 0WA, UK.}
\email{m.dunajski@damtp.cam.ac.uk}
\author{Michael Eastwood}
\address{School of Mathematical Sciences\\
Australian National University\\
ACT 0200, Australia.}
\email{meastwoo@member.ams.org}

\begin{abstract}
Given a projective structure on a three-dimensional manifold, we find explicit
obstructions to the local existence of a Levi-Civita connection in the
projective class. These obstructions are given by projectively invariant
tensors algebraically constructed from the projective Weyl curvature. We show,
by examples, that their vanishing is necessary but not sufficient for local
metrisability.
\end{abstract}
%%%%%%%%%%%%%%%%%%%%%%%%%%%%%%%%%%%%%%%%%%
%PACS numbers:
%%%%%%%%%%%%%%%%%%%%%%%%%%%%%%%%%%%%%%%%%%

\maketitle

\section{Introduction}
There are several inequivalent geometric structures that give rise to a
preferred family of curves on a smooth $n$-manifold~$M$. A {\em path
geometry\/} on $M$ is a locally defined family of unparametrised smooth curves
(called {\em paths\/}), one through each point and in each direction. A path
geometry is {\em projective} if its paths are the unparametrised geodesics of a
torsion-free connection $\nabla$ on~$TM$. The corresponding projective
structure $(M, [\nabla])$ is then defined by the equivalence class of
torsion-free connections sharing their unparametrised geodesics with~$\nabla$.
Finally a path geometry is {\em metrisable\/} if its paths are the
unparametrised geodesics of a (pseudo-)Riemannian metric~$g$. In this case the
path geometry is, of course, also projective since the underlying projective
structure is defined by the Levi-Civita connection of $g$. The converse does
not hold: a general projective structure does not contain a Levi-Civita
connection of any metric.

The characterisation of metrisable projective structures is a classical
problem, which goes back to the work of Roger Liouville~\cite{Liouville}. This
problem has recently been solved if $n=2$: in this case (assuming
real-analyticity for sufficiency, as one must) necessary and sufficient
conditions are given by the vanishing of a set of projective differential
invariants~\cite{BDE}, the simplest of which is of differential order five in
the connection coefficients of a chosen $\nabla\in[\nabla]$. The case $n=2$ is
special---the projective Weyl curvature vanishes on a surface. This is
no longer the case if $n=3$, where the first set of obstructions already arises
at order one, and is algebraic in the projective Weyl tensor. In this paper we
shall present some of these obstructions as explicit projectively invariant
tensors constructed algebraically from the Weyl curvature. 

As is often done in differential geometry, we shall adorn tensors with indices
in order to denote the type of the tensor. Thus, we may denote a vector or
vector field by $X^a$ but $\omega_a$ always denotes a co-vector or $1$-form.
The canonical pairing between vectors and co-vectors is denoted by repeating an
index so that $X^a\omega_a$ is the scalar that would often be written without
indices as $X\intprod\omega$. For any tensor $\psi_{abc}$, we shall denote its
skew part by $\psi_{[abc]}$ and its symmetric part by $\psi_{(abc)}$. For
example, if $\omega_{ab}$ is a $2$-form, then
$$\nabla_{[a}\omega_{bc]}\quad\mbox{and}\quad
X^a\nabla_a\omega_{bc}-2(\nabla_{[b}X^a)\omega_{c]a},$$
for any torsion-free connection~$\nabla_a$, are the exterior derivative and the
Lie derivative in the direction of the vector field~$X^a$, respectively. Such
formulae are not meant to imply any choice of local co\"ordinates. More
precisely, this is Penrose's {\em abstract index notation\/}~\cite{OT}. 

Projective structures are reviewed at the start of Section~\ref{section_1}.
Here, suffice it to say that the primary invariant of a projective structure is
the projective Weyl tensor $W_{ab}{}^c{}_d$, an irreducible part of the
curvature of any connection in the projective class $[\nabla]$ satisfying
$$W_{ab}{}^c{}_d=W_{[ab]}{}^c{}_d,\quad W_{[ab}{}^c{}_{d]}=0,\quad
W_{ab}{}^a{}_d=0.$$
This article is concerned exclusively with the $3$-dimensional case. To
formulate one of our results define a traceless tensor
${V^{ab}}_c={V^{(ab)}}_c$ in terms of the projective Weyl curvature
$W_{ab}{}^c{}_d$ and an arbitrarily chosen non-degenerate section
$\epsilon^{abc}$ of $\Lambda^3(TM)$ by 
\be
\label{V_from_W}
{V^{ab}}_c=\epsilon^{dea}W_{de}{}^b{}_c.
\ee
\begin{theo}
\label{theorem_1}
Let $A,B,C,D,F,J,K,L$ be the symmetric tensors defined by
\begin{align}
\label{ABCD}
A^{ab}&=\textstyle\bigodot(V^{ap}{}_qV^{bq}{}_p),\quad
B^{abc}=\textstyle\bigodot(V^{ap}{}_qV^{bq}{}_rV^{cr}{}_p),\quad
C^{abc}=\textstyle\bigodot(V^{ab}{}_pV^{pq}{}_rV^{cr}{}_q),\nonumber\\
D^{abcd}&=\textstyle\bigodot(V^{ab}{}_pV^{pq}{}_rV^{cr}{}_sV^{ds}{}_q),\quad
J^{abcdef}=\textstyle\bigodot
(V^{ab}{}_pV^{cd}{}_qV^{pq}{}_rV^{er}{}_sV^{st}{}_uV^{fu}{}_t),
\nonumber\\
F^{abcd}&=\textstyle\bigodot(V^{ab}{}_pV^{cd}{}_qV^{pr}{}_sV^{qs}{}_r),
\quad
K^{abcdef}=\textstyle\bigodot
(V^{ab}{}_pV^{cd}{}_qV^{ep}{}_rV^{fq}{}_sV^{rt}{}_uV^{su}{}_t)
\nonumber\\
L^{abcdef}&=\textstyle\bigodot
(V^{ab}{}_pV^{cd}{}_qV^{pq}{}_rV^{rs}{}_tV^{et}{}_uV^{fu}{}_s),
\end{align}
where $\bigodot$ denotes symmetrisation over the non-contracted indices,
and let 
\begin{equation}\label{the_determinant}
T= 24J+12K-24L-24B\odot B-24C\odot C+40B\odot C-24A\odot D+6A\odot F.
\end{equation}
In general, the tensor $T^{abcdef}$ does not vanish. However, if\/ $\nabla$ is
projectively equivalent to a Levi-Civita connection then $T^{abcdef}\equiv 0$.
\end{theo}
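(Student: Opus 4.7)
A natural two-stage approach would be to derive the \emph{shape} of an algebraic obstruction to metrisability invariant-theoretically, and then to pin down the precise numerical coefficients by direct computation in terms of metric data.

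\emph{Derivation of the shape.} By the standard BGG formulation of projective metrisability, $\nabla$ lies in the projective class of a Levi-Civita connection precisely when there exists a non-degenerate weighted symmetric bivector $\sigma^{ab}$ annihilated by a first-order projectively invariant linear operator---in normalised form, $\nabla_{a}\sigma^{bc}=\delta_{a}{}^{(b}\mu^{c)}$ for some auxiliary $\mu^{b}$. Commuting $\nabla_{[a}\nabla_{b]}$ on this equation and invoking the Ricci identity, the Schouten contributions cancel by projective invariance and only the projective Weyl tensor survives. In three dimensions, after re-expressing $W$ through $V$ via (\ref{V_from_W}) and isolating the completely trace-free part, one obtains a purely algebraic, projectively invariant identity of the form $\mathcal{L}(V)\,\sigma = 0$, linear in both $V$ and $\sigma$, which every metric in the projective class must satisfy. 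Iterated contraction of this relation against further copies of $V$ and of $\sigma$, followed by elimination of $\sigma$ using its non-degeneracy, produces a polynomial consequence in $V$ alone. By $GL(3,\R)$-invariant theory, any such totally symmetric, degree-six $(6,0)$-tensor invariant of $V$ must be a linear combination of the eight monomials $J$, $K$, $L$, $B\odot B$, $C\odot C$, $B\odot C$, $A\odot D$, $A\odot F$ appearing in (\ref{ABCD}), and the elimination singles out a particular such combination, which I would identify with $T$ of (\ref{the_determinant}).

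\emph{Fixing the coefficients.} To obtain the precise values $24, 12, -24, -24, -24, 40, -24, 6$, I would substitute the explicit metric form of the Weyl curvature. Since the conformal Weyl tensor vanishes in three dimensions, the Riemann tensor of a Levi-Civita connection is completely determined by the metric $g$ and its Ricci tensor $R_{ab}$; dualising via (\ref{V_from_W}), $V^{ab}{}_{c}$ then becomes an explicit quadratic expression in $g^{-1}$ and $R_{ab}$. Substituting into each monomial of (\ref{ABCD}) and working in a local coframe diagonalising $(g,R)$, each monomial expands into a polynomial in the three Ricci eigenvalues, whereupon the identity $T\equiv 0$ reduces to a polynomial identity in those eigenvalues that can be verified directly.

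\emph{Main obstacle.} The principal difficulty is combinatorial. Each of the eight monomials in (\ref{ABCD}) is a six-fold contraction of copies of $V$ and expands into a large polynomial in metric--Ricci variables; moreover the irregular coefficient $40$, which breaks the pattern of multiples of $24$ among the others, shows that the identity cannot be guessed as a naive symmetrisation of a lower-degree one. Both the elimination step and the final verification are essentially only feasible with computer algebra. The complementary clause that ``in general $T$ does not vanish'' is independent of the above construction and would be established by exhibiting a single projective structure whose $T^{abcdef}$ is non-zero.
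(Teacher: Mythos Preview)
Your overall architecture is right: prolong the metrisability equation, extract the algebraic constraint on $\sigma$ coming from the Weyl curvature, and then eliminate $\sigma$ to obtain an obstruction in $V$ alone. But the crucial step---how to eliminate~$\sigma$---is left vague, and the specific mechanism you propose for ``fixing the coefficients'' cannot work as stated.

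The paper's key idea, which your proposal does not contain, is that the constraint $V^{(ab}{}_{d}\sigma^{c)d}=0$ can be packaged as an endomorphism: for any $1$-form $X_a$, the map $\sigma^{de}\mapsto X_a V^{(ab}{}_{(d}\delta_{e)}{}^{c)}\sigma^{de}$ is a linear endomorphism of the rank-$6$ bundle $\bigodot^2TM(-2)$, and metrisability forces it to have non-trivial kernel for \emph{every}~$X$. Hence its determinant, a sextic in~$X$, vanishes identically; the coefficient tensor of this sextic is precisely~$T^{abcdef}$. This determinant characterisation both singles out $T$ uniquely (up to scale) and makes the vanishing in the metric case immediate. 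The explicit formula~(\ref{the_determinant}) is then obtained by computing this determinant symbolically, not by imposing metric relations.

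Your alternative route---substitute the metric form $V^{ab}{}_c=2R^{d(a}\epsilon^{b)}{}_{dc}$ and solve for the coefficients that make the combination vanish---is under-determined. As the paper shows later, the space of degree-$6$ covariants of $V$ taking values in $\bigodot^6TM(-24)$ is $11$-dimensional (not spanned by your eight monomials: one also needs $A\odot A\odot A$, $A\odot E$, and~$M$), and the subspace vanishing in the metric case is $8$-dimensional. So metric substitution alone yields an $8$-parameter family of obstructions, with no mechanism to select the particular $T$ of~(\ref{the_determinant}). Your ``iterated contraction and elimination using non-degeneracy of~$\sigma$'' would need to be made precise enough to produce a single invariant, and the determinant is exactly the clean way to do that.
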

\begin{theo}\label{theorem_2}
With tensors 
$A^{ab},B^{abc},C^{abc},D^{abcd},F^{abcd},J^{abcdef},K^{abcdef},L^{abcdef}$
defined as in Theorem~\ref{theorem_1}, the following
\begin{equation}\begin{array}{l}
\label{MN}
C^{abc}-2B^{abc},\quad
F^{abcd}-2D^{abcd},\quad
J^{abcdef}-2L^{abcdef},\\
3J^{abcdef}-2C^{(abc}C^{def)},\quad
J^{abcdef}-4K^{abcdef}+4A^{(ab}D^{cdef)}
\end{array}\end{equation}
are generally non-zero but vanish if\/ $\nabla$ is projectively equivalent to a
Levi-Civita connection.
\end{theo}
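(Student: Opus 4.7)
The plan is to reduce each identity in~(\ref{MN}) to a polynomial identity for a $3\times 3$ tracefree symmetric matrix, then derive those identities from the Cayley--Hamilton theorem. Because $W_{ab}{}^c{}_d$ depends only on the projective class $[\nabla]$, and each of the five expressions in~(\ref{MN}) is homogeneous of a single degree in $V$, the statements are independent of the choice of representative $\nabla\in[\nabla]$ and of the scale of the section $\epsilon^{abc}$. We may therefore assume $\nabla$ is itself the Levi-Civita connection of some metric $g_{ab}$ and that $\epsilon^{abc}$ is its volume form.

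In three dimensions the full Riemann tensor of $g$ is determined algebraically by the Ricci tensor. Substituting the 3D Kulkarni--Nomizu expression for $R_{abcd}$ into the defining formula for the projective Weyl tensor and contracting with $\epsilon^{dea}$ exhibits $V^{ab}{}_c$ as an explicit tensorial polynomial, linear in the tracefree Ricci tensor $\phi_{ab}$, with coefficients built from $g^{ab}$ and $\epsilon^{abc}$; the scalar curvature drops out thanks to $V^{ab}{}_b=0$. Passing to a $g$-orthonormal frame in which $\phi_{ab}$ is diagonal with eigenvalues $(\phi_1,\phi_2,\phi_3)$ satisfying $\phi_1+\phi_2+\phi_3=0$, every tensor $A^{ab},\ldots,L^{abcdef}$ of~(\ref{ABCD}) becomes a symmetric tensor whose components are explicit polynomials in the $\phi_i$.

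In this frame, each combination in~(\ref{MN}) reduces to a polynomial identity in $(\phi_1,\phi_2,\phi_3)$ modulo the single relation $\phi_1+\phi_2+\phi_3=0$. Such identities are consequences of Cayley--Hamilton for the matrix $\phi$: the tracefree condition yields
\[
\phi^{3}=\tfrac{1}{2}\,\mathrm{tr}(\phi^{2})\,\phi+\det(\phi)\,I,
\]
so every polynomial invariant of $\phi$ can be reduced to a polynomial in the two scalars $\mathrm{tr}(\phi^{2})$ and $\det(\phi)$. Expanding the five combinations componentwise and performing this reduction verifies the vanishing claim, entirely in parallel with the proof of Theorem~\ref{theorem_1}.

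The principal obstacle is the combinatorial one of the third step: distributing the symmetrisation $\bigodot$ correctly across the high-valence tensors $J,K,L$, and identifying the particular Cayley--Hamilton substitutions that collapse each of the five polynomials in~(\ref{MN}), is where the real labour lies; in practice this is best executed in a computer algebra system. The ``generally non-zero'' half is immediate: a generic tensor $V^{ab}{}_c$ of the shape demanded by~(\ref{V_from_W}) has $15$ independent components, while the metric form above is parametrised by only the $5$ components of $\phi_{ab}$, so the proposed identities impose nontrivial algebraic constraints and a single explicit numerical example exhibits their failure.
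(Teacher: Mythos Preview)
Your approach is correct and matches the paper's: both use the metric-case formula $V^{ab}{}_c=2R^{d(a}\epsilon^{b)}{}_{dc}$ (the paper's Lemma~\ref{keylemma}), pass to an orthonormal frame with the Ricci tensor diagonalised and its trace removed, and verify the identities in~(\ref{MN}) by direct computer computation. One minor remark: the paper additionally derives $C-2B$ non-computationally from the quadratic obstruction $Q_{ab}{}^c$ of Theorem~\ref{theorem_3} via $2\epsilon^{pq(a}Q_{pr}{}^{b}V^{c)r}{}_q=C^{abc}-2B^{abc}$, and its proof of Theorem~\ref{theorem_1} actually proceeds through the determinant of the homomorphism $\Xi$ rather than this frame computation, so your ``in parallel with'' reference is slightly off.
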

In fact, since 
$$\begin{array}{rcl}T&=&
12(J-2L)-3(J-4K+4A\odot D)+5(3J-2C\odot C)\\[3pt]
&&\quad{}+2(C-2B)\odot(6B-7C)+6A\odot (F-2D),\end{array}$$
Theorem~\ref{theorem_2} implies Theorem~\ref{theorem_1}. However, we shall see
that the invariant $T$ arises in a more fundamental way, already described
in~\cite{BDE}. More specifically, one can construct from the Weyl curvature a
homomorphism $\Xi:\bigodot^2(TM)\to\bigodot^3(TM)$ that must have a non-trivial
kernel in the metrisable case and $T^{abcdef}$ is defined to be what amounts to
the determinant of this homomorphism: $T^{abcdef}X_aX_bX_cX_dX_eX_f$ is
characterised as being the determinant of the composition
$$\textstyle\bigodot^2(TM)\xrightarrow{\;\Xi\;}\bigodot^3(TM)
\xrightarrow{\;X\intprod\underbar{\enskip}\;}
\bigodot^2(TM)$$
for any $1$-form~$X_a$. The homomorphism $\Xi$ is constructed by forming and
prolonging the metrisability equation (e.g.~\cite{East_Mat}) and is the natural
first step~\cite{nurowski} in searching for a metric in a given projective
class. We should point out that these constructions are carried out having
arbitrarily chosen a non-zero section $\epsilon^{abc}$ of the line bundle
$\Lambda^3(TM)$. However, since a different section only changes the scale of
the various obstruction tensors at each point, whether they vanish or not is
unaffected. In the main body of this article we shall restore precision by
introducing {\em projective weights\/}, in effect a mechanism for keeping track
of the scale of~$\epsilon^{abc}$.

Theorem~\ref{theorem_2} is established by a different route, which seemingly
creates a proliferation of projectively invariant obstructions to
metrisability. For example, if one considers tensors such as $T^{abcdef}$,
taking values in $\bigodot^6(TM)$ and of degree $6$ in the projective Weyl
tensor, equivalently in the tensor $V^{ab}{}_c$, then one finds an
$8$-dimensional space of obstructions in the $11$-dimensional space of
projective invariants of this type. Indeed, there is already a quadratic
obstruction as follows. 
\begin{theo}\label{theorem_3} In order that a
projective structure\/ $[\nabla]$ be metrisable, the invariant tensor
\begin{equation}\label{this_is_Q}
Q_{ab}{}^c=\epsilon_{pq(a}V^{pr}{}_{b)}V^{qc}{}_r\end{equation}
must vanish, whilst in general it does not.
\end{theo}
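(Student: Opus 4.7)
The plan is to reduce the claim to a finite algebraic calculation in the Levi-Civita case, exploiting the three-dimensional fact that the Riemann tensor is algebraically determined by the Ricci tensor. The vanishing of $Q_{ab}{}^c$ is independent of the chosen section $\epsilon^{abc}$ (different choices only rescale $Q$ by a nowhere-vanishing function), so if $[\nabla]$ is metrisable with representative the Levi-Civita connection of some~$g$, I am free to choose $\epsilon^{abc}$ to be the volume form of~$g$.

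In $n=3$ the projective Weyl curvature of a Levi-Civita connection is $W_{ab}{}^c{}_d = R_{ab}{}^c{}_d - \tfrac{1}{2}(\delta_a^c R_{bd} - \delta_b^c R_{ad})$, and the Riemann tensor is determined by Ricci via $R_{abcd} = g_{ac}R_{bd} - g_{ad}R_{bc} - g_{bc}R_{ad} + g_{bd}R_{ac} - \tfrac{R}{2}(g_{ac}g_{bd} - g_{ad}g_{bc})$. Substituting these into (\ref{V_from_W}) and collapsing the $\epsilon$-contractions (by the identity $\epsilon_{pqr}\epsilon^{pst} = \delta_q^s\delta_r^t - \delta_q^t\delta_r^s$) produces several terms; remarkably, the scalar-curvature contributions cancel and one is left with
\[
V^{ab}{}_c \;=\; \epsilon^{dca}S_{db} + \epsilon^{dcb}S_{da} \;=\; 2\epsilon^{dc(a}S_{|d|b)},
\]
where $S_{ab} := R_{ab} - \tfrac{1}{3}R\,g_{ab}$ is the trace-free Ricci tensor. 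In the metrisable case, therefore, $V^{ab}{}_c$ is constrained to a $5$-parameter family inside the $15$-dimensional projective Weyl tensor space.

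Substituting this form of $V$ into (\ref{this_is_Q}) yields four cross-terms. Each, after a further pair of $\epsilon\epsilon$ contractions, reduces to an expression involving a single $\epsilon$ multiplied by a product either of two copies of $S$ or of $S$ with $S^2$ (where $(S^2)_{ab} := S_a{}^pS_{pb}$ is again symmetric). Two classes of terms die immediately because $\epsilon^{pqr}S_{pq}=0$ and $\epsilon^{pqr}(S^2)_{pq}=0$ (symmetric contracted against antisymmetric). The remaining pieces, once the symmetrisation in $(a,b)$ prescribed by the definition of $Q$ is performed, pair off with opposite signs and cancel, giving $Q_{ab}{}^c\equiv 0$ in the metrisable case. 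For the ``generically non-zero'' assertion, one either exhibits an explicit projective structure whose $V^{ab}{}_c$ lies outside this $5$-parameter family and computes $Q\neq 0$ by hand, or simply notes that a non-trivial homogeneous polynomial on the $15$-dimensional Weyl space that vanishes on a $5$-dimensional subvariety cannot be identically zero.

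The chief obstacle is the index bookkeeping in the third paragraph: four cross-terms, each breaking into several sub-terms under the two $\epsilon\epsilon$ contractions, must be carefully tracked and paired off; no new ideas beyond the symmetry of $S_{ab}$ and the antisymmetry of $\epsilon_{abc}$ are needed.
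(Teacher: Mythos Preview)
Your approach to the vanishing in the metrisable case is essentially the paper's: derive the special form $V^{ab}{}_c = 2R^{d(a}\epsilon^{b)}{}_{dc}$ for a Levi-Civita connection (this is Lemma~\ref{keylemma}), substitute into $Q_{ab}{}^c$, and collapse the three $\epsilon$-factors via the standard identities until everything cancels under the $(ab)$-symmetrisation. Your use of the trace-free Ricci $S_{ab}$ rather than $R_{ab}$ is harmless since the trace part drops out anyway, though your displayed formula $V^{ab}{}_c = 2\epsilon^{dc(a}S_{|d|b)}$ has the index $c$ upstairs on the right-hand side but downstairs on the left---a cosmetic slip that you should fix before carrying out the contraction bookkeeping.

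There is, however, a genuine gap in the non-vanishing half. Your second option is circular: the assertion that ``a non-trivial homogeneous polynomial on the $15$-dimensional Weyl space that vanishes on a $5$-dimensional subvariety cannot be identically zero'' presupposes that $Q$ is non-trivial, which is precisely what must be shown; vanishing on a proper subvariety tells you nothing about whether a polynomial is identically zero (the zero polynomial vanishes on every subvariety). You must therefore actually carry out your first option. The paper does this by writing out a component such as $Q_{33}{}^3$ as an explicit polynomial in the entries of $V^{ab}{}_c$ and observing it is non-zero. The paper also offers a genuine abstract alternative: the irreducible decomposition~(\ref{first_decomposition}) of the symmetric square of the bundle housing $V^{ab}{}_c$ contains the relevant target with multiplicity one, so a non-zero quadratic covariant of this shape exists, and by Weyl's first fundamental theorem of invariant theory the only candidate formula up to scale is~(\ref{this_is_Q}).
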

\noindent And if one prefers a scalar obstruction, then there is one of 
degree~$3$ as follows.
\begin{theo}\label{theorem_4} In order that a projective structure\/ $[\nabla]$
be metrisable, the invariant scalar 
$$S=\epsilon_{abc}V^{ap}{}_qV^{bq}{}_rV^{cr}{}_p$$
must vanish, whilst in general it does not.
\end{theo}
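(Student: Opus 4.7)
The plan is to establish the vanishing of $S$ in the metrisable case via the first-order prolongation of the metrisability equation, and to exhibit a specific three-dimensional projective structure witnessing $S\neq 0$ in general.

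For the vanishing, I invoke the prolongation machinery sketched after Theorem~\ref{theorem_1}. If $[\nabla]$ is metrisable, then the weighted symmetric bivector $\sigma^{ab}$ associated with the inverse metric density satisfies the projective metrisability equation $\nabla_{(a}\sigma^{bc)}_0=0$, and prolonging this equation together with the Ricci identity yields an algebraic integrability condition of the form $\Xi(\sigma)=0$, where $\Xi:\bigodot^2(TM)\to\bigodot^3(TM)$ is a projectively invariant homomorphism built algebraically and linearly from $V^{ab}{}_c$. Up to normalisation and a projective trace correction, $\Xi(\sigma)^{abc}=V^{(ab}{}_p\sigma^{c)p}$, and deriving this explicit form is the first step. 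The harder step is to extract $S=0$ from the existence of a non-zero $\sigma\in\ker\Xi$: my strategy is to contract the identity $\Xi(\sigma)=0$ against a tensor built quadratically from $V^{ab}{}_c$ and linearly from $\epsilon_{abc}$, using the symmetry $V^{(ab)}{}_c=V^{ab}{}_c$, the tracelessness $V^{ap}{}_a=0$, and the Bianchi-like identity inherited from $W_{[ab}{}^c{}_{d]}=0$, arranged so that the contraction reduces to $S\cdot\sigma^{ab}$ (up to a non-zero numerical factor). Because $\sigma^{ab}$ is non-degenerate in the metrisable case, dividing through yields $S=0$.

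For generic non-vanishing, it suffices to exhibit a single explicit projective structure on which $S\neq 0$. Take a torsion-free connection on $\R^3$ whose coefficients $\Gamma^a_{bc}(x)$ are chosen as generic polynomials of low degree, together with a fixed non-degenerate volume form $\epsilon^{abc}$; compute the curvature, extract the projective Weyl tensor, dualise via \eqref{V_from_W} to obtain $V^{ab}{}_c$, and evaluate $S$ by direct (if need be, computer-algebra) calculation. A generic choice of $\Gamma$ yields $S\neq 0$.

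The main obstacle lies in the second step of the vanishing argument. The kernel condition on $\Xi$ most directly produces obstructions of degree six in $V^{ab}{}_c$, as in Theorem~\ref{theorem_1}, and extracting the much lower cubic scalar $S$ from it is not automatic: it requires exhibiting the specific combination of symmetries and Bianchi identities that collapses the naturally sextic expression into something tractable. Establishing this cubic obstruction directly, rather than as a byproduct of the sextic invariant $T^{abcdef}$, is precisely what gives Theorem~\ref{theorem_4} its simpler character.
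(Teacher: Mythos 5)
Your proposal does not contain a proof of the vanishing of $S$: the step you describe as ``the harder step''---contracting $\Xi(\sigma)=0$ against something quadratic in $V^{ab}{}_c$ so as to produce $S\cdot\sigma^{ab}$---is exactly the content of the theorem, and you neither exhibit the contraction nor show that one exists. You acknowledge this yourself in your final paragraph. The risk is real: such an identity would amount to a polynomial factorisation $M_V\circ\Xi_V=S(V)\,\mathrm{id}$ with $M_V$ quadratic in $V$, and nothing you say guarantees that $S\sigma^{ab}$ lies in the ideal generated by the components of $V^{(ab}{}_p\sigma^{c)p}$. So as written the argument establishes only the (correct) first step, namely that metrisability forces a non-degenerate $\sigma$ into the kernel of $\Xi$.

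The paper's route is different and much more elementary, and it is the idea you are missing. In three dimensions the Riemann tensor of a metric connection is determined by its Ricci tensor, and Lemma~\ref{keylemma} converts this into the statement that for a Levi-Civita connection $V^{ab}{}_c=2R^{d(a}\epsilon^{b)}{}_{dc}$ for a symmetric $R^{ab}$ (equivalently, lowering an index with the metric, $V_{(abc)}=0$). One then simply substitutes this special form into the candidate invariant and checks, by a finite algebraic computation, that it vanishes; in fact the paper observes $S=-V^{ab}{}_cQ_{ab}{}^c$ and deduces Theorem~\ref{theorem_4} directly from the vanishing of $Q_{ab}{}^c$ in Theorem~\ref{theorem_3}, whose proof is precisely such a substitution. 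Note that this vanishing is \emph{not} forced by symmetry alone---$\bigodot^3$ of the relevant $\mathrm{SO}(3)$-module $\bigodot^2_\circ TM$ does contain a trivial summand (essentially $\mathrm{tr}(R^3)$), so $S$ could a priori restrict to a non-zero invariant there---which is why some explicit computation is unavoidable and why your hope of extracting the result purely from ``symmetries and Bianchi identities'' is unlikely to succeed as stated. If you wish to keep your kernel-based framing, the honest repair is: a non-degenerate $\sigma\in\ker\Xi$ determines a metric with respect to which $V_{(abc)}=0$, which characterises the special form of Lemma~\ref{keylemma}, and then compute. Your argument for generic non-vanishing of $S$ is fine and matches the paper's (evaluate on the curvature of an explicit connection, using Lemma~\ref{all_weyl_tensors_arise} to realise an arbitrary Weyl tensor).
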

Besides proving these theorems, we shall provide a systematic way of creating
many more invariants. Nevertheless, we shall show by examples,
in~\S\ref{egorov_subsection}--\ref{newtonian_subsection}, that this
proliferation of invariants is insufficient to characterise the metrisable
projective structures. In Section~\ref{section_4} we shall reformulate the
problem for path geometries in terms of systems of two second order ODE for the
unparametrised paths.

We would like to thank Katharina Neusser for pointing out the projectively 
invariant pairing (\ref{Neusser_pairing}) as a useful device in understanding 
the metrisability equation and also for helpful discussions concerning the 
form of the tensor $V^{ab}{}_c$ for the Egorov and Newtonian structures
in~\S\ref{egorov_subsection}--\ref{newtonian_subsection}.

Finally, as detailed in~\S\ref{egorov_subsection}, we would like to thank
Vladimir Matveev for drawing our attention to an alternative
proof~\cite{BSK_VSM} of the non-metrisability of the Egorov structure and also
for his pertinent comments concerning the possible values of the degree of
mobility of Riemannian and Lorentzian metrics in $3$ dimensions.

\section{Projective structures and metrisability}
\label{section_1}
Let $M$ be a smooth manifold. Let us consider an equivalence
class $[\nabla]$ of torsion-free connections on~$TM$, where to say that $\nabla$
and $\hat{\nabla}$ belong to $[\nabla]$ is to say that there is a $1$-form
$\Upsilon_a$ such that 
\be
\label{equivalence}
\hat{\nabla}_a X^b=\nabla_a X^b+\Upsilon_a X^b+{\delta_a}^b\Upsilon_c X^c. 
\ee
This is the condition for the geodesics sprays of $\nabla$ and $\hat{\nabla}$
on $TM$ to have the same projection to~$\PP(TM)$. Therefore, it is exactly the
condition that all connections in $[\nabla]$ share the same unparametrised
geodesics on~$M$. In other words, the equivalence class $[\nabla]$
operationally defines what is a projective structure.

The curvature of a given connection $\nabla\in[\nabla]$ is defined by
\[
[\nabla_a, \nabla_b]X^c={{R_{ab}}^c}_d X^d,
\]
and can be uniquely decomposed as
\be
\label{defofRho}
{{R_{ab}}^c}_d= {{W_{ab}}^c}_d+
\delta_a{}^c\Rho_{bd}-\delta_b{}^c\Rho_{ad} +\beta_{ab}\delta_d{}^c
\ee
where $\beta_{ab}=-2\Rho_{[ab]}$ and ${{W_{ab}}^c}_d$ is totally trace-free.
Then $\Rho_{ab}$ is the {\em projective Schouten tensor\/} and $W_{ab}{}^c{}_d$
is the {\em projective Weyl tensor\/}. If we change connection in the
projective class using (\ref{equivalence}) then
\[
\hat{\Rho}_{ab}={\Rho}_{ab}-\nabla_a\Upsilon_b+\Upsilon_a\Upsilon_b,
\quad
\hat{\beta}_{ab}=\beta_{ab}+2\nabla_{[a}\Upsilon_{b]},
\]
whilst ${{{W_{ab}}^{c}}}_d$ remains unchanged. 

We now specialise to the case when $M$ is $3$-dimensional. Computing the 
effect of a change of connection (\ref{equivalence}) on a $3$-form 
$\eta_{abc}$, we find that 
\begin{equation}\label{change_on_three-forms}
\hat\nabla_a\eta_{bcd}=\nabla_a\eta_{bcd}-3\Upsilon_a\eta_{bcd}
-\Upsilon_b\eta_{acd}-\Upsilon_c\eta_{bad}-\Upsilon_d\eta_{bca}
=\nabla_a\eta_{bcd}-4\Upsilon_a\eta_{bcd}\end{equation} 
and so if $M$ is oriented (as we shall suppose henceforth) and $\eta_{abc}$ is
chosen to be everywhere non-vanishing (we say that $\eta_{abc}$ is a 
{\em choice of scale\/}), then we may specify $\Upsilon_a$ by requiring that
$\nabla_a\eta_{bcd}-4\Upsilon_a\eta_{bcd}=0$, thus obtaining a unique
connection $\hat\nabla$ in the projective class such that
$\hat\nabla_a\eta_{bcd}=0$. We shall refer to the connections obtained in this
way as {\em special\/}. {From} (\ref{defofRho}), we find that
$$[\nabla_a,\nabla_b]\eta_{cde}=-4\beta_{ab}\eta_{cde}$$
and conclude that $\beta_{ab}=0$ for special connections and hence that the
Schouten tensor $\Rho_{ab}$ is symmetric. (If $M$ is not oriented, then we
define a choice of scale to be a nowhere-vanishing section of $(\Lambda_M^3)^2$
instead, where $\Lambda_M^3$ is the bundle of $3$-forms on~$M$.) For any
$w\in{\mathbb{R}}$, it is convenient to denote by ${\mathcal{E}}(w)$ the line
bundle $(\Lambda_M^3)^{-w/4}$, invariantly defined as the bundle whose fibre at
$p\in M$ is the $1$-dimensional vector space
$$\{\phi:\Lambda_+^3T_p^*M\to{\mathbb{R}}\mid 
\phi(\lambda\omega)=\lambda^{w/4}\phi(\omega),\;\forall\lambda>0\},$$ 
where $\Lambda_+^3T_p^*M$ denotes the $3$-forms at $p$ positive with respect to
the orientation, and we shall refer to a section $\rho$ of ${\mathcal{E}}(w)$
as a {\em projective density of weight $w$\/}. There are canonical isomorphisms
${\mathcal{E}}(k)={\mathcal{E}}(1)^{\otimes k}$ for $k\in{\mathbb{Z}}$. Also,
by construction, there is an identification ${\mathcal{E}}(-4)=\Lambda_M^3$,
which we shall write as $\rho\mapsto\rho\epsilon_{abc}$ for $\rho$ of
projective weight~$-4$. Equivalently, we have a canonical volume form
$\epsilon_{abc}$ of weight $4$, that is to say a canonical section
of~$\Lambda_M^3(4)$, the tensor product $\Lambda_M^3\otimes{\mathcal{E}}(4)$.
Having done this, a scale may be alternatively specified as a nowhere-vanishing
density $\sigma$ of projective weight $1$, so that
$\eta_{abc}=\sigma^{-4}\epsilon_{abc}$ is the corresponding volume form. This
is the viewpoint we shall adopt henceforth. In summary, we are working with
special connections specified by a choice of projective density $\sigma$ of
weight~$1$. Choosing a different scale, say $\hat\sigma=\Omega^{-1}\sigma$ for
some nowhere-vanishing function~$\Omega$, induces a projective change of
connection (\ref{equivalence}) where $\Upsilon=\Omega^{-1}d\Omega$. In the
presence of a scale $\sigma\in\Gamma(M,{\mathcal{E}}(1))$, we may view a
projective density $\rho\in\Gamma(M,{\mathcal{E}}(w))$ as a smooth function,
specifically $f=\rho/\sigma^w$, but if we change scale
$\sigma\mapsto\hat\sigma=\Omega^{-1}\sigma$, then this function changes
according to $\hat{f}=\Omega^wf$. Finally, for any scale
$\sigma\in\Gamma(M,{\mathcal{E}}(1))$, the line bundles ${\mathcal{E}}(w)$
inherit connections characterised by $\nabla\sigma^w=0$ and then, if $\rho$ is
a projective density of weight $w$, we see that
$$\hat\nabla_a\rho=\nabla_a\rho+w\Upsilon_a\rho$$
and note that this is consistent with (\ref{change_on_three-forms}) and the
identification $\Lambda_M^3={\mathcal{E}}(-4)$. Otherwise said, the
tautological $3$-form $\epsilon_{abc}$ of weight $4$ is covariant constant
($\nabla_a\epsilon_{bcd}=0$) for any special connection on $\Lambda_M^3(4)$. We
shall denote by $\epsilon^{abc}$ the induced canonical section of the dual
bundle $\Lambda^3(TM)(-4)$ normalised so that
$$\epsilon_{abc}\epsilon^{def}=\delta_a{}^d\delta_b{}^e\delta_c{}^f
+\delta_a{}^e\delta_b{}^f\delta_c{}^d+\delta_a{}^f\delta_b{}^d\delta_c{}^e
-\delta_a{}^e\delta_b{}^d\delta_c{}^f
-\delta_a{}^d\delta_b{}^f\delta_c{}^e
-\delta_a{}^f\delta_b{}^e\delta_c{}^d,$$
equivalently that $\epsilon^{abc}\epsilon_{abc}=6$.

For those who find this discussion arcane, we should admit that a low-tech
alternative is to regard formul{\ae} containing $\epsilon^{abc}$ as defined
using an arbitrarily chosen nowhere-vanishing section $\epsilon^{abc}$ of
$\Lambda^3(TM)$ and that the projective weight simply keeps track of how these
expressions change if $\epsilon^{abc}$ is rescaled. Looking back at
(\ref{V_from_W}) now, we see that $V^{ab}{}_c$ may be invariantly regarded as 
a tensor satisfying
$$V^{ab}{}_c=V^{(ab)}{}_c,\quad V^{ab}{}_a=0,\quad
\mbox{and of projective weight~$-4$}.$$
Similarly, the scalar invariant $S$ from Theorem~\ref{theorem_4} is a
projective density of weight~$-8$.

Even more arcane, yet correspondingly even more useful, is to record both the
symmetries of a tensor and its projective weight by decorating a suitable
Dynkin diagram in the style of~\cite{beastwood}. An explanation sufficient for
our purposes is as follows. We shall denote by \xoo{k}{l}{m} the bundle of 
totally trace-free tensors
$$S^{\overbrace{\scriptstyle ab\cdots c}^{\scriptstyle m
\makebox[0pt][l]{ \scriptsize contravariant indices}}}
{}_{\underbrace{\scriptstyle de\cdots f}_{\scriptstyle l
\makebox[0pt][l]{ \scriptsize covariant indices}}}\hspace{6em}
\mbox{of projective weight }k+2l-m$$
and symmetric in its covariant and contravariant indices. These constitute a
complete list of the irreducible bundles on an oriented projective
$3$-manifold as a parabolic geometry in the sense of~\cite{parabook}. It 
is also useful to introduce the {\em degree\/} of such a bundle as
\begin{equation}\label{def_of_degree}\deg(\xoo{k}{l}{m})=-(3k+2l+m)/4.
\end{equation}
(It is the action of the {\em grading element\/} normalised as
in~\cite{parabook}.) The point about the degree is that it simply adds under
tensor product, for example, 
\begin{equation}\label{tensor_product}
\textstyle\Lambda_M^1\otimes\Lambda_M^1=\xoo{-2}{1}{0}\otimes\xoo{-2}{1}{0}=
\xoo{-4}{2}{0}\oplus\xoo{-3}{0}{1}=\bigodot^2\!T^*M\oplus\Lambda_M^2
\end{equation}
and changes sign when taking duals, for example,
$$\deg(TM)=\deg(\xoo{1}{0}{1})=-1\quad\mbox{whilst}\quad
\deg(\Lambda_M^1)=\deg(\xoo{-2}{1}{0})=1.$$
Also notice that our previous discussion concerning projective weights and the
tautologically defined tensors $\epsilon_{abc}$ and $\epsilon^{abc}$ is
implicitly incorporated into this notation. For example, the identification
$\Lambda_M^2=\xoo{-3}{0}{1}$ in (\ref{tensor_product}) is given by
$\omega_{ab}\mapsto\epsilon^{abc}\omega_{bc}$. Fundamental for this article
is~(\ref{V_from_W}), giving $V^{ab}{}_c\in\Gamma(M,\xoo{-4}{1}{2})$ of
projective weight~$-4$. This irreducible tensor is every bit as good as the
unweighted projectively invariant Weyl tensor~$W_{ab}{}^c{}_d$, the inverse to
(\ref{V_from_W}) being given by
$W_{ab}{}^c{}_d=\frac12\epsilon_{eab}V^{ec}{}_d$.

\subsection{Metrisability} 
As already remarked, a (pseudo-)Riemannian metric $g$ on $M$ gives rise to a
projective structure~$[\nabla]$, namely the one that contains the Levi-Civita
connection of~$g$. Hence we obtain a first order non-linear operator
\be\textstyle
\label{1st_order_op}
J^1(\bigodot^2\!\Lambda_M^1)\supset 
J^1(\bigodot_{\mathrm{nd}}^2\Lambda_M^1)\xrightarrow{\,\sigma^0\,}
{\mathrm{Pr}}(M),
\ee
which carries a metric to its associated projective structure, where
\begin{itemize}\addtolength{\itemsep}{3pt}
\item $\bigodot^2\!\Lambda_M^1$ is the vector bundle of symmetric covariant 
tensors,
\item $\bigodot_{\mathrm{nd}}^2\Lambda_M^1$ is the subbundle of non-degenerate
such tensors,
\item $J^1(\bigodot^2\!\Lambda_M^1)$ and 
$J^1(\bigodot_{\mathrm{nd}}^2\Lambda_M^1)$ are their first jet bundles 
(e.g.~\cite{S}), 
\item ${\mathrm{Pr}}(M)$ is the affine bundle of projective structures on~$M$.
\end{itemize}
We note that, in the $3$-dimensional case,
\begin{itemize}
\item $\bigodot^2\!\Lambda_M^1$ is a vector bundle of rank~$6$,
\item $J^1(\bigodot^2\!\Lambda_M^1)$ is a vector bundle of rank~$24$,
\item $J^1(\bigodot_{\mathrm{nd}}^2\Lambda_M^1)$ is a Zariski-open subbundle,
\item ${\mathrm{Pr}}(M)$ is modelled on $\xoo{-3}{2}{1}$, which has rank~$15$.
\end{itemize}
Even taking into account that constant multiples of a given metric give rise to
the same connection and hence the same projective structure, the dimensions
indicate that $\sigma^0$ should be surjective and it is easy to check, using
local co\"ordinates, that this is, indeed, the case.
Differentiating~(\ref{1st_order_op}), however, gives rise to its first 
prolongation
$$\textstyle J^2(\bigodot_{\mathrm{nd}}^2\Lambda_M^1)\xrightarrow{\,\sigma^1\,}
J^1({\mathrm{Pr}}(M))$$
where the left hand side is now a Zariski-open subbundle of a vector bundle of
rank $60$ whilst the right hand side is an affine bundle modelled on
$J^1(\,\xoo{-3}{2}{1})$ a vector bundle also of rank~$60$. Taking scaling into
account, it follows that $\sigma^1$ cannot be surjective. In other words,
already at first order in the projective structure, we expect to see
obstructions to metrisability. Theorem \ref{theorem_1} shows that, indeed,
there are obstructions at this order which are algebraic in components of the
projective Weyl curvature. There is no restriction on the value of this
curvature as the following lemma shows.
\begin{lemma}\label{all_weyl_tensors_arise}
Choose $n\geq 3$ and let $W_{ab}{}^c{}_d$ be any element of 
$({\mathbb{R}}^n)^*\otimes({\mathbb{R}}^n)^*\otimes{\mathbb{R}}^n
\otimes({\mathbb{R}}^n)^*$ satisfying
\begin{equation}\label{weyl_symmetries}
W_{ab}{}^c{}_d=W_{[ab]}{}^c{}_d,\quad W_{[ab}{}^c{}_{d]}=0,\quad
W_{ab}{}^a{}_d=0.\end{equation} 
Let $M$ be an $n$-dimensional manifold and let $p\in M$ be an arbitrarily
chosen point. Then there is a torsion-free connection on $TM$ whose projective
Weyl curvature at $p$ is $W_{ab}{}^c{}_d$.
\end{lemma}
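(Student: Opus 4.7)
The plan is to reduce the question to an algebraic problem at the single point $p$ and then solve it by a linear Ansatz for the Christoffel symbols. Since the statement is local, I would choose coordinates $x^1,\ldots,x^n$ on a neighbourhood of $p$ with $p$ at the origin. In such coordinates a torsion-free connection is specified by Christoffel symbols $\Gamma^c_{ab}(x)$ symmetric in $a,b$, and the Ansatz
$$\Gamma^c_{ab}(x)=\gamma^c_{ab,e}\,x^e,$$
with constants $\gamma^c_{ab,e}$ symmetric in $a,b$, defines such a connection throughout the chart. Because $\Gamma$ vanishes at the origin, the full curvature at $p$ collapses to
$$R_{ab}{}^c{}_d(p)=\gamma^c_{bd,a}-\gamma^c_{ad,b},$$
and the task is reduced to producing $\gamma$'s realising any prescribed right-hand side satisfying the Weyl symmetries.

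Given the prescribed tensor $W_{ab}{}^c{}_d$, I would try the explicit choice
$$\gamma^c_{bd,a}=\tfrac13\bigl(W_{ab}{}^c{}_d+W_{ad}{}^c{}_b\bigr),$$
which is manifestly symmetric in $b,d$, as required for $\Gamma$ to be torsion-free. Substituting into the formula for $R(p)$ gives
$$R_{ab}{}^c{}_d(p)=\tfrac13\bigl(2W_{ab}{}^c{}_d+W_{ad}{}^c{}_b-W_{bd}{}^c{}_a\bigr),$$
and a single application of the first Bianchi identity $W_{[ab}{}^c{}_{d]}=0$ rewrites $W_{ad}{}^c{}_b-W_{bd}{}^c{}_a$ as $W_{ab}{}^c{}_d$, yielding $R_{ab}{}^c{}_d(p)=W_{ab}{}^c{}_d$.

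Finally, since the prescribed $W_{ab}{}^c{}_d$ is totally trace-free by hypothesis, the decomposition~(\ref{defofRho}) applied to this $R$ at the origin forces the Schouten tensor $\Rho_{ab}$ and the skew part $\beta_{ab}$ both to vanish at $p$, so the projective Weyl curvature at $p$ of the constructed connection is exactly the given $W_{ab}{}^c{}_d$. There is no substantial obstacle in this argument: the only non-trivial step is guessing the correct explicit formula for $\gamma$ and confirming it with the first Bianchi identity, which is a three-line index manipulation. Alternatively, one can argue abstractly that the linear map $\gamma\mapsto R(p)$ hits every tensor satisfying $R_{[ab]}{}^c{}_d=R_{ab}{}^c{}_d$ and $R_{[ab}{}^c{}_{d]}=0$ by a dimension count (its kernel being the totally symmetric $\gamma^c_{(abe)}$), and then take $R=W$ directly; but the explicit formula above makes the construction transparent without invoking such a count.
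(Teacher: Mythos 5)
Your proposal is correct and is essentially identical to the paper's proof: your Ansatz $\gamma^c_{bd,a}=\tfrac13(W_{ab}{}^c{}_d+W_{ad}{}^c{}_b)=\tfrac23W_{a(b}{}^c{}_{d)}$ is exactly the connection $\nabla_bX^c=\partial_bX^c+\tfrac23x^aW_{a(b}{}^c{}_{d)}X^d$ exhibited in the paper. You simply spell out the verification (vanishing of $\Gamma$ at the origin, the first Bianchi identity, and trace-freeness forcing $\Rho_{ab}$ and $\beta_{ab}$ to vanish at $p$) that the paper leaves to the reader.
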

\begin{proof} The construction need only be local since connections can be
patched together by a partition of unity. In local
co\"ordinates~$(x^1,x^2,\ldots,x^n)$, the connection
$$\nabla_bX^c=\frac{\partial X^c}{\partial x^b}
+\frac23x^aW_{a(b}{}^c{}_{d)}X^d$$
has projective Weyl curvature $W_{ab}{}^c{}_d$ at the origin.
\end{proof}
In fact, the proof is also valid when $n=2$ but the statement is vacuous since
only the zero tensor satisfies the symmetries~(\ref{weyl_symmetries}). But for
$n=3$ space of the tensors satisfying (\ref{weyl_symmetries}) is
$15$-dimensional. This is clear from the alternative encoding of the Weyl
curvature as the tensor $V^{ab}{}_c$ in (\ref{V_from_W}) satisfying
$V^{ab}{}_c=V^{(ab)}{}_c$ and $V^{ab}{}_a=0$. Alternatively, the dimension of
any finite-dimensional irreducible tensor bundle can be computed from its
highest weight and the Dynkin diagram notation established above is designed 
with this in mind. There are various algorithms and computer implementations 
thereof. For example
$${\mathrm{rank}\,}(\xoo{k}{l}{m})=\verb!dim([l,m],A2)!$$
where the right hand side of this equation is an instruction written in
LiE~\cite{LiE}. We shall soon use LiE more seriously. 

Henceforth, we shall use the terminology {\em metric\/} to mean 
(pseudo-)Riemannian metric. The signature plays no essential r\^ole in our 
considerations and can be discussed separately.

One can attack the metrisability problem directly, asking for a metric $g_{ab}$
such that its Levi-Civita connection be projectively equivalent to a given
connection. Although the resulting partial differential equations on
$g_{ab}$ are projectively invariant by construction, they are also non-linear.
A surprising observation, essentially due to Liouville~\cite{Liouville}, is
that there is a non-linear change of variables that turns this system into a
linear one. For the convenience of the reader, we summarise the conclusions in
$3$ dimensions here and refer to \cite{East_Mat,M} for detail.
\begin{theo}\label{metrisability}
There is a projectively invariant linear differential operator
$$\textstyle\bigodot^2(TM)(-2)
=\xoo{0}{0}{2}\to\xoo{-2}{1}{2}\quad\mbox{given by}\quad\sigma^{bc}\mapsto
(\nabla_a\sigma^{bc})_\circ\equiv
\textstyle\nabla_a\sigma^{bc}-\frac12\delta_a{}^{(b}\nabla_d\sigma^{c)d}$$
called the\/ {\em metrisability operator} and a projectively invariant
differential pairing
\begin{equation}\label{Neusser_pairing}
\xoo{0}{0}{2}\times\xoo{2}{0}{0}\to\xoo{1}{0}{1}\quad\mbox{given by}\quad
(\sigma^{ab},\tau)\mapsto
\textstyle\sigma^{ab}\nabla_b\tau-\frac12\tau\nabla_b\sigma^{ab}.\end{equation}
If $\sigma^{bc}$ is symmetric and of projective weight~$-2$,
i.e.~$\sigma^{bc}\in\Gamma(\xoo{0}{0}{2})$, then
$$\textstyle\det\sigma
\equiv\frac16\sigma^{ad}\sigma^{be}\sigma^{cf}\epsilon_{abc}\epsilon_{def}$$
is a projective density of weight~$2$,
i.e.~$\det\sigma\in\Gamma(\xoo{2}{0}{0})$. If a tensor 
$\sigma^{bc}\in\Gamma(\xoo{0}{0}{2})$ 
satisfies the projectively invariant\/ {\em metrisability equation} 
\begin{equation}\label{metrisability_equation}
(\nabla_a\sigma^{bc})_\circ=0,\end{equation}
then the pairing with its determinant vanishes:
\begin{equation}\label{key}\textstyle
\sigma^{ab}\nabla_b(\det\sigma)-\frac12(\det\sigma)\nabla_b\sigma^{ab}=0.
\end{equation}
Furthermore, wherever $\det\sigma$ is non-zero, the weight zero tensor
$g^{ab}\equiv(\det\sigma)\sigma^{ab}$ defines a metric whose Levi-Civita
connection lies in the given projective class. Finally, up to sign, all metrics
in a given projective class arise in this manner.
\end{theo}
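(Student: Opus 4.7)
The theorem bundles four essentially separate assertions: (a) projective invariance of the metrisability operator and of the pairing~(\ref{Neusser_pairing}); (b) the projective weight of $\det\sigma$; (c) the identity~(\ref{key}) as a consequence of the metrisability equation~(\ref{metrisability_equation}); and (d) the bijection, up to sign, between metrics in the projective class and non-degenerate weight-$(-2)$ solutions $\sigma^{bc}$ of~(\ref{metrisability_equation}) via $g^{ab} = (\det\sigma)\sigma^{ab}$.  Each reduces to a direct calculation using~(\ref{equivalence}) extended to the weighted tensors in the way set up earlier in the section.

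For (a), apply~(\ref{equivalence}) to each of the two contravariant indices of $\sigma^{bc}$ and add the weight correction $w\Upsilon_a\rho$ with $w=-2$; the result is $\hat\nabla_a\sigma^{bc} = \nabla_a\sigma^{bc} + 2\delta_a{}^{(b}\Upsilon_e\sigma^{c)e}$.  Tracing (using $\delta_a^a = 3$) yields $\hat\nabla_d\sigma^{bd} = \nabla_d\sigma^{bd} + 4\Upsilon_e\sigma^{be}$, and substituting both into $\hat\nabla_a\sigma^{bc} - \frac12\delta_a{}^{(b}\hat\nabla_d\sigma^{c)d}$ shows the $\Upsilon$-terms cancel.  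The pairing~(\ref{Neusser_pairing}) is treated in exactly the same spirit: the $\Upsilon$-term from $\hat\nabla_b\tau = \nabla_b\tau + 2\Upsilon_b\tau$ cancels the one from $\hat\nabla_b\sigma^{ab}$.  Part (b) is then a weight count: three factors $\sigma^{ab}$ of weight $-2$ and two factors $\epsilon_{abc}$ of weight $+4$ total $+2$.

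For (c), Leibniz together with $\nabla\epsilon_{abc}=0$ (valid for any special $\nabla$) gives $\nabla_a\det\sigma = \mathrm{adj}(\sigma)_{bc}\nabla_a\sigma^{bc}$, where $\mathrm{adj}(\sigma)_{bc} = \frac12\epsilon_{bde}\epsilon_{cfg}\sigma^{df}\sigma^{eg}$ is the symmetric adjugate satisfying Cramer's identity $\sigma^{ab}\mathrm{adj}(\sigma)_{bc} = (\det\sigma)\delta_c^a$.  Writing out~(\ref{metrisability_equation}) one obtains $\nabla_a\sigma^{bc} = \frac14(\delta_a^b\mu^c + \delta_a^c\mu^b)$ with $\mu^a := \nabla_b\sigma^{ab}$ (the $\frac14$ arising because $\delta_a{}^{(b}\mu^{c)}$ itself contains a $\frac12$ from the symmetrisation).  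Substituting and using the symmetry of $\mathrm{adj}(\sigma)$ gives $\nabla_a\det\sigma = \frac12\mathrm{adj}(\sigma)_{ac}\mu^c$, and contracting with $\sigma^{ab}$ via Cramer yields $\sigma^{ab}\nabla_b\det\sigma = \frac12(\det\sigma)\mu^a$, which is~(\ref{key}).

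For (d), the forward direction is a direct calculation: set $g^{ab} = (\det\sigma)\sigma^{ab}$, a weight-zero symmetric tensor, non-degenerate wherever $\det\sigma\ne0$.  Expanding $\hat\nabla_c g^{ab}$ for $\hat\nabla = \nabla + \Upsilon$ (no weight contribution this time) and inserting the formula for $\nabla_c\det\sigma$ derived above, one checks that the specific choice $\Upsilon_c = -\frac14 s_{cd}\mu^d$, where $s_{cd}$ is the matrix inverse of $\sigma^{cd}$, makes $\hat\nabla_c g^{ab}$ vanish identically; this $\hat\nabla \in [\nabla]$ is therefore the Levi-Civita connection of $g^{ab}$.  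Conversely, any metric $g^{ab}$ whose Levi-Civita lies in $[\nabla]$ yields a solution by $\sigma^{bc} := (\det g)^{-1/4}g^{bc}$: the projective weight of $\det g$ is $+8$, so $\sigma^{bc}$ has weight $-2$, and since $\nabla^g g = 0$ implies $\nabla^g\det g = 0$ and hence $\nabla^g\sigma^{bc} = 0$, the metrisability equation holds trivially; the fourth root accounts for the asserted sign ambiguity.  The main obstacle throughout is arithmetic bookkeeping: the factors $\frac12,\frac14$ from symmetrisation and trace-freeing, together with the coefficients arising from the adjugate and Cramer's rule in dimension three (where $\delta_a^a=3$ makes several similar-looking traces differ), must be reconciled precisely for~(\ref{key}) and for the choice of $\Upsilon$ in (d) to come out with the stated constants.
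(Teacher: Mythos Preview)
Your proof is correct and follows exactly the route the paper itself takes: the paper's own proof is a one-sentence sketch declaring all four claims to be ``straightforwardly verified from the definitions'' and recording only the key formula $\Upsilon_a=-\tfrac14 g_{ab}(\det\sigma)\nabla_c\sigma^{bc}$, which is precisely your $\Upsilon_c=-\tfrac14 s_{cd}\mu^d$ since $g_{ab}(\det\sigma)=s_{ab}$. Your write-up simply carries out those verifications in full, including the adjugate/Cramer argument for~(\ref{key}) and the converse via $\sigma^{bc}=(\det g)^{-1/4}g^{bc}$, which the paper relegates to its informal post-theorem summary.
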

\begin{proof}
As set forth in the statement of this theorem, these claims are
straightforwardly verified from the definitions, the only further observation
required being that (\ref{key}) can be rewritten on $\{\det\sigma\not=0\}$ as
$\hat\nabla(g^{ab})=0$ where $\hat\nabla_a$ is projectively equivalent to
$\nabla_a$ according to (\ref{equivalence}) if we take
$\Upsilon_a=-\frac14g_{ab}(\det\sigma)\nabla_c\sigma^{bc}$, where $g_{ab}$ is
the inverse to $g^{ab}$. (We have taken the opportunity here, following a 
suggestion of Katharina Neusser, to streamline the
exposition in~\cite{East_Mat} by highlighting the r\^ole of~(\ref{key}).)
\end{proof}
An informal summary of Theorem~\ref{metrisability} is that the metrisability of
a given projective structure is controlled by the projectively invariant 
{\em metrisability equation\/}~(\ref{metrisability_equation}), there being a
$2$--$1$ correspondence between non-degenerate solutions of this equation and
{\em positive\/} metrics in the projective class (note that $\sigma^{ab}$ and 
$-\sigma^{ab}$ give rise to the same metric, that these metrics have positive 
determinant (we call them {\em positive\/}), and that 
conversely if $g^{ab}$ is such a metric, then
$$\textstyle\sigma^{ab}\equiv(\det g)^{-1/4}g^{ab}$$
solves~(\ref{metrisability_equation}).)
%%%%%%%%%%%%%%%%%%%%%%%%%%%%%%%%%%%%%%%%%%%%%%%%%%%%%%%%%%%%%%%%%%%%%%%%%%%%%%
%% PRIVATE NOTE                                                             %% 
%% In odd dimensions there is a local distinction between positive and      %%
%% negative metrics according to the sign of their determinant and this has %%
%% nothing to do with choice of orientation! Well, it depends precisely on  %%
%% the signature and in three dimensions it's those of type (+,+,+) or      %%
%% (+,-,-) that are positive (so not so surpising in the end).              %%
%%%%%%%%%%%%%%%%%%%%%%%%%%%%%%%%%%%%%%%%%%%%%%%%%%%%%%%%%%%%%%%%%%%%%%%%%%%%%%
\begin{theo}\label{prolongation_obstruction}
If $\sigma^{bc}$ solves the metrisability 
equation~\eqref{metrisability_equation}, then
\begin{equation}\label{curvature_constraint}
\textstyle W_{ab}{}^c{}_e\sigma^{de}+W_{ab}{}^d{}_e\sigma^{ce}
+\frac{2}{3}\delta_{[a}{}^cW_{b]}{}_e{}^d{}_f\sigma^{ef}
+\frac{2}{3}\delta_{[a}{}^dW_{b]}{}_e{}^c{}_f\sigma^{ef}=0.\end{equation}
\end{theo}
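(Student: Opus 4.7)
The plan is to prolong the metrisability equation (\ref{metrisability_equation}) by one derivative and read off the algebraic constraint produced by the Ricci identity. To keep everything clean I would first pick a special connection in the projective class; then $\beta_{ab}=0$, the Schouten tensor $\mathrm{P}_{ab}$ is symmetric, and the induced connection on $\mathcal{E}(-2)$ is flat, so no weight correction enters the Ricci identity applied to $\sigma^{bc}$.

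With this set-up, (\ref{metrisability_equation}) is equivalent to
\[\nabla_a\sigma^{bc}=\delta_a{}^{(b}\mu^{c)},\qquad \mu^c:=\tfrac{1}{2}\nabla_d\sigma^{cd}.\]
Differentiating once more and skew-symmetrising over $[ab]$ yields $[\nabla_a,\nabla_b]\sigma^{cd}=-2\delta_{[a}{}^{(c}\nabla_{b]}\mu^{d)}$, a manifest $\delta$-trace. On the other hand, the Ricci identity together with the decomposition $R_{ab}{}^c{}_d=W_{ab}{}^c{}_d+2\delta_{[a}{}^c\mathrm{P}_{b]d}$ gives
\[[\nabla_a,\nabla_b]\sigma^{cd}=W_{ab}{}^c{}_e\sigma^{de}+W_{ab}{}^d{}_e\sigma^{ce}+2\delta_{[a}{}^c\mathrm{P}_{b]e}\sigma^{de}+2\delta_{[a}{}^d\mathrm{P}_{b]e}\sigma^{ce}.\]
Equating the two and absorbing the $\mathrm{P}$- and $\mu$-terms into a single auxiliary tensor $U_b{}^d:=-\tfrac{1}{2}\nabla_b\mu^d-\mathrm{P}_{be}\sigma^{ed}$ produces the clean identity
\[W_{ab}{}^c{}_e\sigma^{de}+W_{ab}{}^d{}_e\sigma^{ce}=2\bigl(\delta_{[a}{}^cU_{b]}{}^d+\delta_{[a}{}^dU_{b]}{}^c\bigr).\]

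The remaining step is purely algebraic. Tracing this identity on $a=c$, the total trace-freeness $W_{ab}{}^a{}_e=0$ collapses the left-hand side to $W_{ab}{}^d{}_e\sigma^{ae}$, while the right-hand side becomes $3U_b{}^d-\delta_b{}^dU_e{}^e$. The pure-trace piece of $U$ contributes nothing when it is plugged back into the previous display because $\delta_{[a}{}^c\delta_{b]}{}^d+\delta_{[a}{}^d\delta_{b]}{}^c=0$, so I may substitute $U_b{}^d\mapsto\tfrac{1}{3}W_{ab}{}^d{}_e\sigma^{ae}$. Rewriting $W_{ab}{}^d{}_e\sigma^{ae}=-W_{be}{}^d{}_f\sigma^{ef}$ by the skew-symmetry of $W$ in its first pair then delivers exactly~(\ref{curvature_constraint}).

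The only real obstacle is the indexing bookkeeping: checking that each symmetric and skew bracket lines up, that no curvature acts on the hidden weight factor in $\sigma^{cd}$, and that the non-invariant auxiliaries $\mu^c$, $\mathrm{P}_{ab}$, and $U_b{}^d$ truly disappear from the final statement — as they must, since the resulting identity is projectively invariant and can be recomputed in any $\nabla\in[\nabla]$.
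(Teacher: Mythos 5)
Your proof is correct and follows essentially the same route as the paper: write the equation as $\nabla_a\sigma^{bc}=\delta_a{}^{(b}\mu^{c)}$, apply the Ricci identity for a special connection, trace once to solve for the auxiliary tensor, and substitute back. Your observation that the pure-trace part of $U_b{}^d$ drops out of the substitution because $\delta_{[a}{}^c\delta_{b]}{}^d+\delta_{[a}{}^d\delta_{b]}{}^c=0$ is actually cleaner than the paper's corresponding step (which claims $\Psi_b{}^b=0$ from a second trace, a trace that in fact yields only $0=0$), and it delivers the identity \eqref{curvature_constraint} just the same.
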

\begin{proof}
If we write (\ref{metrisability_equation}) as 
$$\nabla_b\sigma^{cd}=\delta_b{}^c\mu^d+\delta_b{}^d\mu^c$$
for some field~$\mu^a$, then 
\begin{equation}\label{start_to_prolong}
(\nabla_a\nabla_b-\nabla_b\nabla_a)\sigma^{cd}
+2\delta_{[a}{}^c\nabla_{b]}\mu^d+2\delta_{[a}{}^d\nabla_{b]}\mu^c=0
\end{equation}
but from (\ref{defofRho}) using, without loss of generality, a special
connection
$$(\nabla_a\nabla_b-\nabla_b\nabla_a)\sigma^{cd}=
W_{ab}{}^c{}_e\sigma^{de}+W_{ab}{}^d{}_e\sigma^{ce}
+2\delta_{[a}{}^c\Rho_{b]e}\sigma^{de}+2\delta_{[a}{}^d\Rho_{b]e}\sigma^{ce}$$
so it follows that
\begin{equation}\label{nearly_there}
W_{ab}{}^c{}_e\sigma^{de}+W_{ab}{}^d{}_e\sigma^{ce}
+2\delta_{[a}{}^c\Psi_{b]}{}^d+2\delta_{[a}{}^d\Psi_{b]}{}^c=0\end{equation}
where
$$\Psi_b{}^d=\nabla_b\mu^d+\Rho_{be}\sigma^{de}.$$
Tracing (\ref{nearly_there}) over ${}_b{}^c$ yields
$$W_{ab}{}^d{}_e\sigma^{be}
-3\Psi_{a}{}^d+\delta_{a}{}^d\Psi_b{}^b=0.$$
Finally, tracing this conclusion over ${}_a{}^d$ shows that $\Psi_b{}^b=0$ and
substituting for $\Psi_b{}^d$ back into (\ref{nearly_there})
gives~(\ref{curvature_constraint}), as required.
\end{proof}
We remark that it is usual to establish (\ref{curvature_constraint}) by firstly
prolonging the metrisability operator, as is done in~\cite{East_Mat}, to obtain
a projectively invariant connection on an auxiliary vector bundle whose
curvature is then computed and found to include the left hand side
of~(\ref{curvature_constraint}). In fact, it is only necessary partially to
prolong the operator before (\ref{curvature_constraint}) emerges and this is
exactly what is done in this proof. Notice that the vanishing of~$\Psi_b{}^b$,
observed at the end of the proof, implies that we can write
$\nabla_b\mu^d=\delta_b{}^d\rho-\Rho_{be}\sigma^{de}$ for some smooth function
$\rho$ and, indeed, this would be the next stage in the general prolongation
procedure advocated in~\cite{BCEG}. This choice loses invariance, and in
\cite{East_Mat} (and also in~\cite[Example~3.4]{HSSS} in accordance with their 
general theory) it is found to be convenient to add some Weyl
curvature to the right hand side
$$\textstyle\nabla_b\mu^d
=\delta_b{}^d\rho-\Rho_{be}\sigma^{de}+\frac13W_{be}{}^d{}_f\sigma^{ef}$$
eventually to obtain a projectively invariant connection. In this article we
shall not pursue this prolongation procedure any further. It is already
observed in~\cite{BDE} that (\ref{curvature_constraint}) gives rise to
non-trivial obstructions to metrisability. To express these obstructions in $3$
dimensions, let us recall that
$W_{ab}{}^c{}_d=\frac12\epsilon_{eab}V^{ec}{}_d$, which enables us to rewrite
(\ref{curvature_constraint}) as
\begin{equation}\label{curvature_constraint_revisited}
V^{(ab}{}_d\sigma^{c)d}=0
\end{equation}
or, in other words, that 
$$\Xi^{abc}{}_{de}\sigma^{de}=0,\quad\mbox{where}\enskip
\Xi^{abc}{}_{de}=V^{(ab}{}_{(d}\delta_{e)}{}^{c)}.$$
Taking projective weights into account, this defines an invariant homomorphism
$$\textstyle\Xi\colon\bigodot^2(TM)(-2)=\xoo{0}{0}{2}\longrightarrow
\bigodot^3(TM)(-6)=\xoo{-3}{0}{3}$$
and Theorem~\ref{prolongation_obstruction} may be recast as follows.
\begin{theo}\label{prolongation_obstruction_revisited}
If $\sigma^{bc}$ solves the metrisability 
equation~\eqref{metrisability_equation}, then $\sigma^{bc}$ lies in the kernel 
of the endomorphism
\begin{equation}\label{basic_endomorphism}\textstyle
\Gamma(M,\bigodot^2(TM)(-2))\ni\sigma^{de}\mapsto
X_a\Xi^{abc}{}_{de}\sigma^{de}\in\Gamma(M,\bigodot^2(TM)(-2))\end{equation}
for any projectively weighted\/ $1$-form $X_a$ of weight~$4$.
\end{theo}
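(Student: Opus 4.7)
Theorem~\ref{prolongation_obstruction_revisited} is essentially a three-dimensional repackaging of Theorem~\ref{prolongation_obstruction}, so the strategy is to invoke the earlier result and translate the curvature constraint \eqref{curvature_constraint} into the language of the tensor $V^{ab}{}_c$.

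The first step is to apply Theorem~\ref{prolongation_obstruction}, which immediately produces \eqref{curvature_constraint} from the hypothesis that $\sigma^{bc}$ solves the metrisability equation. The remaining work is purely algebraic, and the main obstacle is to verify that in dimension three the constraint \eqref{curvature_constraint} is equivalent to the cleaner form $V^{(ab}{}_d\sigma^{c)d}=0$ of \eqref{curvature_constraint_revisited}. To do this I would contract \eqref{curvature_constraint} with $\epsilon^{abg}$ and repeatedly apply the three-dimensional identification $W_{ab}{}^c{}_d=\tfrac{1}{2}\epsilon_{eab}V^{ec}{}_d$ (the inverse of \eqref{V_from_W}) together with the standard $\epsilon$-$\delta$ contractions $\epsilon_{abc}\epsilon^{abf}=2\delta_c{}^f$ and $\epsilon_{abc}\epsilon^{aef}=\delta_b{}^e\delta_c{}^f-\delta_b{}^f\delta_c{}^e$. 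Using in addition the symmetry $V^{ab}{}_c=V^{(ab)}{}_c$ and the tracelessness $V^{ab}{}_a=0$, the four summands of \eqref{curvature_constraint} should recombine, after cancellation of the trace contributions coming from the $\delta_{[a}{}^cW_{b]}{}_e{}^d{}_f$-type terms, into a single symmetrisation over the three upper free indices, producing a non-zero multiple of $V^{(ab}{}_d\sigma^{c)d}$. Since contraction with $\epsilon^{abg}$ is invertible via another $\epsilon$-contraction, the converted identity is genuinely equivalent to~\eqref{curvature_constraint}.

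Once this is done, the rest is bookkeeping. By definition $\Xi^{abc}{}_{de}=V^{(ab}{}_{(d}\delta_{e)}{}^{c)}$, and using the symmetry of $\sigma^{de}$ the contraction $\Xi^{abc}{}_{de}\sigma^{de}$ collapses at once to $V^{(ab}{}_d\sigma^{c)d}$; thus \eqref{curvature_constraint_revisited} is precisely the statement $\Xi^{abc}{}_{de}\sigma^{de}=0$ as a section of $\bigodot^3(TM)(-6)$. Pairing with an arbitrary weighted $1$-form $X_a$ of projective weight $4$ then gives $X_a\Xi^{abc}{}_{de}\sigma^{de}=0$, which exhibits $\sigma^{bc}$ as an element of the kernel of the endomorphism \eqref{basic_endomorphism}; the weight bookkeeping $(-4)+(-2)+4=-2$ confirms that the output does indeed lie in $\bigodot^2(TM)(-2)$, consistent with the stated endomorphism target.
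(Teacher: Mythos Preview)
Your proposal is correct and follows essentially the same approach as the paper. The paper presents this theorem simply as a ``recasting'' of Theorem~\ref{prolongation_obstruction}: it substitutes $W_{ab}{}^c{}_d=\tfrac12\epsilon_{eab}V^{ec}{}_d$ into \eqref{curvature_constraint} to obtain \eqref{curvature_constraint_revisited}, identifies this with $\Xi^{abc}{}_{de}\sigma^{de}=0$, and then contracts with~$X_a$, which is precisely the route you outline (with the $\epsilon$-contraction you propose being equivalent to the direct substitution the paper uses).
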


\noindent{\em Proof of Theorem \ref{theorem_1}.}
This is an almost immediate corollary of 
Theorem~\ref{prolongation_obstruction_revisited}. As already observed in the 
introduction, as $\bigodot^2TM(-2)$ is a vector bundle of rank~$6$, 
the determinant of the endomorphism (\ref{basic_endomorphism}) has the form 
$X_aX_bX_cX_dX_eX_fT^{abcdef}$ for some projectively invariant 
$$\textstyle T^{abcdef}\in\Gamma(M,\bigodot^6TM(-24))
=\Gamma(M,\;\xoo{-18}{0}{6}).$$
{\em A priori\/} this might always vanish but a suitable Weyl tensor 
$W_{ab}{}^c{}_d$ is exhibited in \cite[\S8]{BDE} with non-zero determinant 
(and this is realised by a projective structure in accordance with 
Lemma~\ref{all_weyl_tensors_arise}). It remains only to check that 
(\ref{the_determinant}) gives a formula for $T$ but this is easily 
accomplished with the aid of computer algebra.\hfill$\square$

\medskip Since $V^{ab}{}_a=0$, it follows that $\Xi^{abc}{}_{bc}=0$ whence the
endomorphism $X_a\Xi^{abc}{}_{de}$ is traceless for any $X_a$. If we set
$\Xi_X{}^{bc}{}_{de}\equiv X_a\Xi^{abc}{}_{de}$, the Cayley--Hamilton Theorem
for traceless $6\times 6$ matrices now implies that the vanishing of
$T^{abcdef}$ is equivalent to the vanishing of
$$24\,\T({\Xi_X}^6)-18\,\T({\Xi_X}^4)\T({\Xi_X}^2)
-8\,(\T({\Xi_X}^3))^2+3\,(\T({\Xi_X}^2))^3,$$
which is a little easier to compute. Some further consequences of the vanishing
of $T^{abcdef}$ have been analysed in~\cite{casey,nurowski}. Whilst a detailed
analysis of the aforementioned prolonged system (as, for example, presented
in~\cite[Theorem~3.1]{East_Mat}) will surely lead to more obstructions, we do
not pursue this here but, in the following section, opt for an alternative and
more elementary approach. It leads to a plethora of invariant obstructions
whose relation to the metrisability equation and its prolongation remains
mysterious.

\subsection{An elementary construction of obstructions}
In three dimensions, the curvature of a metric connection is entirely 
captured by the Ricci curvature. Specifically,
\begin{equation}\label{Riemannian_decomposition}
\textstyle R_{ab}{}^c{}_d
=\delta_a{}^cR_{bd}-\delta_b{}^cR_{ad}-g_{ad}R_b{}^c+g_{bd}R_a{}^c
-\frac12R(\delta_a{}^cg_{bd}-\delta_b{}^cg_{ad}).\end{equation}
A metric connection is special and from (\ref{defofRho}) we see firstly that 
$\Rho_{ab}=\frac12R_{ab}$ and hence that 
$$\textstyle W_{ab}{}^c{}_d
=R_{ab}{}^c{}_d-\delta_a{}^c\Rho_{bd}+\delta_b{}^c\Rho_{ad}
=R_{ab}{}^c{}_d-\frac12\delta_a{}^cR_{bd}+\frac12\delta_b{}^cR_{ad}.$$
{From} (\ref{Riemannian_decomposition}) we deduce that for a metric connection 
in three dimensions, 
$$\textstyle W_{ab}{}^c{}_d
=\frac12\delta_a{}^cR_{bd}-\frac12\delta_b{}^cR_{ad}
-g_{ad}R_b{}^c+g_{bd}R_a{}^c
-\frac12R\delta_a{}^cg_{bd}+\frac12R\delta_b{}^cg_{ad}.$$
Therefore from its definition (\ref{V_from_W}), we find that
\begin{equation}\label{V_from_def}
V^{ab}{}_c=\epsilon^{dea}W_{de}{}^b{}_c
=\epsilon^{bea}R_{ec}-2\epsilon_c{}^{ea}R_e{}^b
+\epsilon_c{}^{ba}R\end{equation}
\begin{lemma}\label{keylemma} For a metric connection 
$V^{ab}{}_c=2R^{d(a}\epsilon^{b)}{}_{dc}$.
\end{lemma}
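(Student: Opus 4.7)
The plan is to start from the formula~(\ref{V_from_def}), derived just above the lemma, and exploit the built-in symmetry $V^{ab}{}_c = V^{(ab)}{}_c$. The key observation is that two of the three terms on the right-hand side of~(\ref{V_from_def}) are manifestly antisymmetric in the pair $(a,b)$: the first term $\epsilon^{bea}R_{ec}$ changes sign under $a\leftrightarrow b$ because swapping the two outer slots of a totally antisymmetric $\epsilon$ flips the sign, and the scalar-curvature term $\epsilon_c{}^{ba}R$ is antisymmetric in $(a,b)$ for the same reason. Both therefore contribute nothing after symmetrising in $(a,b)$, and only the middle term survives, leaving
$$V^{ab}{}_c = -2\,\epsilon_c{}^{e(a}R_e{}^{b)}.$$

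The remaining task is to recognise the right-hand side as $2R^{d(a}\epsilon^{b)}{}_{dc}$. I would proceed by showing that $\epsilon_c{}^{ea}R_e{}^b$ and $-R^{da}\epsilon^b{}_{dc}$ agree after symmetrising in $(a,b)$, or equivalently that their sum $\epsilon_c{}^{ea}R_e{}^b + R^{da}\epsilon^b{}_{dc}$ is antisymmetric in $(a,b)$. Raising and lowering with $g$ (permissible since we are in the metric case) turns $\epsilon_c{}^{ea}R_e{}^b$ into $R^{db}\epsilon_{cd}{}^a$, and one then verifies, by using the total antisymmetry of $\epsilon$ together with the symmetry of $R$, that $R^{db}\epsilon_{cd}{}^a + R^{da}\epsilon^b{}_{dc}$ indeed picks up an overall minus sign under $a\leftrightarrow b$; the two sign changes on the $\epsilon$ factors, coming from the transpositions $(c,b)$ and $(a,c)$ of indices, provide the required cancellation.

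The whole argument is essentially index manipulation using only the antisymmetry of $\epsilon$ and the symmetry of $R$. The main obstacle, such as it is, amounts to careful sign bookkeeping when permuting the slots of $\epsilon$; the symmetrisation trick used above keeps this minimal, and it can equivalently be phrased as an instance of the three-dimensional identity $\delta_c^{[a}\epsilon^{bde]}=0$ (four indices cannot be antisymmetrised in three dimensions).
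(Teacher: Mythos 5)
Your proof is correct and follows essentially the same route as the paper: symmetrise the right-hand side of (\ref{V_from_def}) over $(a,b)$ term by term, observe that the first and third terms are antisymmetric in $(a,b)$ and so drop out, and then identify the surviving middle term with $2R^{d(a}\epsilon^{b)}{}_{dc}$ by index gymnastics with $\epsilon$ and $g$. The only (harmless) difference is that you justify replacing the right-hand side by its symmetrisation via the a priori symmetry $V^{ab}{}_c=V^{(ab)}{}_c$, whereas the paper checks directly that the right-hand side of (\ref{V_from_def}) is symmetric in $ab$ by contracting with $\epsilon_{dab}$ and obtaining zero.
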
  
\begin{proof}
Though it might not appear so, the right hand of (\ref{V_from_def}) is 
symmetric in $ab$ as may be verified by computing
$$\epsilon_{dab}(\epsilon^{bea}R_{ec}-2\epsilon_c{}^{ea}R_e{}^b
+\epsilon_c{}^{ba}R)=2\delta_d{}^eR_{ec}-2(g_{bc}\delta_d{}^e
-g_{dc}\delta_b{}^e)R_e{}^b-2g_{dc}R=0.$$
Symmetrising term-by-term in (\ref{V_from_def}) gives the required formula.
\end{proof}

At first glance, it may seem that Lemma~\ref{keylemma} cannot be useful in
restricting the possible Weyl curvature of a metrisable projective structure
because the metric is already involved in the formula for $V^{ab}{}_c$
especially via the tensor $\epsilon^b{}_{dc}$. It turns out, however, that
there are non-trivial projective covariants that necessarily vanish for
$V^{ab}{}_c$ of this special form no matter what metric and no matter what
symmetric form $R^{da}$ are chosen. The simplest example is
$Q_{ab}{}^c=\epsilon_{pq(a}V^{pr}{}_{b)}V^{qc}{}_r$ from
Theorem~\ref{theorem_3}.

\medskip\noindent{\em Proof of Theorem \ref{theorem_3}.} Firstly, let us
establish that $Q_{ab}{}^c$ does not always vanish. A general method that is
almost instantly effective with a computer is simply to compute all the
coefficients of $Q_{ab}{}^c$ as polynomials in, for example, the $15$ variables
\begin{equation}\label{preferred_variables}
\begin{array}{l}
V^{11}{}_2, V^{11}{}_3, V^{21}{}_1, V^{21}{}_2, V^{21}{}_3,
V^{22}{}_1, V^{22}{}_3, V^{31}{}_1,\\ V^{31}{}_2, V^{31}{}_3, 
V^{32}{}_1, V^{32}{}_2, V^{32}{}_3, V^{33}{}_1, V^{33}{}_2,
\end{array}\end{equation}
acting as co\"ordinates on the space
$$\{V^{ab}{}_c\in{\mathbb{R}}^3\otimes{\mathbb{R}}^3\otimes({\mathbb{R}}^3)^*
\mid V^{ab}{}_c=V^{(ab)}{}_c,\;V^{ab}{}_a=0\},$$
any element of which can arise via (\ref{V_from_W}) from the projective Weyl
curvature of a suitable torsion-free connection in accordance with
Lemma~\ref{all_weyl_tensors_arise}. Also, in this calculation, since there is 
only one totally skew $3$-tensor up to scale, we may as well take
$$\epsilon_{123}=\epsilon_{231}=\epsilon_{312}=1\enskip\mbox{and}\enskip
\epsilon_{213}=\epsilon_{132}=\epsilon_{321}=-1.$$
In this case, there is no real need for a computer to obtain, for example,
$$Q_{33}{}^3=V^{11}{}_3 V^{32}{}_1 - V^{31}{}_1 V^{21}{}_3 
           - V^{31}{}_2 V^{22}{}_3 + V^{21}{}_3 V^{32}{}_2$$
and we are done. In fact, for low order invariants such as~$Q_{ab}{}^c$,
glancing ahead to our more systematic investigation starting with
Proposition~\ref{quadratics}, non-vanishing can also be seen without
calculation as follows. The decomposition (\ref{first_decomposition}) proves
the existence of a non-zero covariant $Q_{ab}{}^c=Q_{(ab)}{}^c$ and the only
remaining issue is to find a formula for it. According to Weyl's first
fundamental theorem of invariant theory~\cite{weyl} we are obliged to contract
two copies of $V^{ab}{}_c$ with (by counting the number of covariant and
contravariant indices) one copy of $\epsilon_{abc}$ and then take linear
combinations. Bearing in mind the symmetries of $V^{ab}{}_c$, up to scale the
only possibility for $Q_{ab}{}^c$ is~(\ref{this_is_Q}). This is especially
clear using {\em wiring diagrams\/} as in~\cite{OT}:
$$\raisebox{26pt}{we know}
\begin{picture}(29,40)(10,-16)
% box
\put(23,10){\line(1,0){10}}
\put(23,10){\line(0,1){5}}
\put(23,15){\line(1,0){10}}
\put(33,10){\line(0,1){5}}
% lines
\put(25,15){\line(0,1){10}}
\put(31,15){\line(0,1){10}}
\put(19,25){\line(1,0){12}}
\put(19,0){\line(0,1){25}}
\put(28,0){\line(0,1){10}}
\end{picture}\raisebox{26pt}{$=0$\quad and}
\begin{picture}(29,40)(10,-16)
% box
\put(23,10){\line(1,0){10}}
\put(23,10){\line(0,1){5}}
\put(23,15){\line(1,0){10}}
\put(33,10){\line(0,1){5}}
% lines
\put(31,15){\line(0,1){10}}
\put(19,10){\line(0,1){5}}
% curves
\put(23,10){\oval(8,12)[b]}
\put(22,15){\oval(6,12)[t]}
\end{picture}
\raisebox{26pt}{$=0$\quad therefore\enskip $Q_{ab}{}^c=$}
\begin{picture}(32,40)(10,-5)
% boxes
\put(27,25){\line(1,0){10}}
\put(27,25){\line(0,1){5}}
\put(27,30){\line(1,0){10}}
\put(37,25){\line(0,1){5}}
\put(23,10){\line(1,0){10}}
\put(23,10){\line(0,1){5}}
\put(23,15){\line(1,0){10}}
\put(33,10){\line(0,1){5}}
% lines
\put(35,30){\line(0,1){10}}
\put(29,30){\line(0,1){5}}
\put(24,15){\line(0,1){20}}
\put(32,15){\line(0,1){10}}
\put(19,35){\line(1,0){10}}
\put(19,0){\line(0,1){35}}
\put(28,0){\line(0,1){10}}
% symmetrise
\qbezier (13,5) (15,7) (17,5)
\qbezier (17,5) (19,3) (21,5)
\qbezier (21,5) (23,7) (25,5)
\qbezier (25,5) (27,3) (29,5)
\qbezier (29,5) (31,7) (33,5)
\end{picture}
\raisebox{26pt}{.}$$
Now, we must show that $Q_{ab}{}^c$ vanishes in the metrisable case. Well, if
$V^{ab}{}_c$ has the form given in Lemma~\ref{keylemma}, then we compute
$$\begin{array}{rcl}\epsilon_{pqa}V^{pr}{}_{b}V^{qc}{}_r
&=&\epsilon_{pqa}(R^{dp}\epsilon^r{}_{db}+R^{dr}\epsilon^p{}_{db})
(R^{eq}\epsilon^c{}_{er}+R^{ec}\epsilon^q{}_{er})\\[4pt]
&=&\epsilon_{pqa}\epsilon^r{}_{db}\epsilon^c{}_{er}R^{dp}R^{eq}
  +\epsilon_{pqa}\epsilon^r{}_{db}\epsilon^q{}_{er}R^{dp}R^{ec}\\
&&\quad{}+\epsilon_{pqa}\epsilon^p{}_{db}\epsilon^c{}_{er}R^{dr}R^{eq}
         +\epsilon_{pqa}\epsilon^p{}_{db}\epsilon^q{}_{er}R^{dr}R^{ec}\\[4pt]
&=&\epsilon_{pqa}(\delta_d{}^cg_{be}-\delta_b{}^cg_{de})R^{dp}R^{eq}
  +\epsilon_{pqa}(\delta_d{}^qg_{be}-\delta_b{}^qg_{de})R^{dp}R^{ec}\\
&&\quad{}+(g_{qd}g_{ab}-g_{ad}g_{qb})\epsilon^c{}_{er}R^{dr}R^{eq}
         +(g_{qd}g_{ab}-g_{ad}g_{qb})\epsilon^q{}_{er}R^{dr}R^{ec}\\[4pt]
&=&\epsilon_{pqa}R^{cp}R_b{}^q
  -\epsilon_{pqa}\delta_b{}^cR^{dp}R_d{}^q
  +\epsilon_{pqa}R^{qp}R_b{}^c
  -\epsilon_{pba}R^{dp}R_d{}^c\\
&&\quad{}+g_{ab}\epsilon^c{}_{er}R_q{}^rR^{eq}
         -\epsilon^c{}_{er}R_a{}^rR^e{}_b
         +g_{ab}\epsilon_{der}R^{dr}R^{ec}
         -\epsilon_{ber}R_a{}^rR^{ec}\\[4pt]
&=&\epsilon_{pqa}R^{cp}R_b{}^q
  -\epsilon_{pba}R^{dp}R_d{}^c
         -\epsilon^c{}_{er}R_a{}^rR^e{}_b
         -\epsilon_{ber}R_a{}^rR^{ec}\\[4pt]
&=&\epsilon_{pqa}R^{cp}R_b{}^q
  +\epsilon_{abp}R^{dp}R_d{}^c
         +\epsilon^c{}_{re}R_a{}^rR_b{}^e
         -\epsilon_{bpq}R_a{}^qR^{cp},
\end{array}$$
which is evidently skew in $ab$, as required.\hfill$\square$

\medskip Alternatively, we may compute in a preferred basis, the projective
invariance ensuring that it does not matter what basis is chosen. In the
Riemannian setting, for example, we may chose an orthonormal basis so that
$g_{ab}$ is represented by the identity matrix and in addition choose
$\epsilon_{abc}$ to be the associated volume form. We may also diagonalise
$R^{ab}$ and, optionally, remove its trace since $g^{ab}$ does not contribute
to $V^{ab}{}_c=2R^{d(a}\epsilon^{b)}{}_{dc}$. We leave the resulting
verification to the reader. It is also straightforward to instruct a computer
to work with these normalisations and this is our preferred method for
analysing more complicated projective invariants. Finally, it is sufficient to
work in the Riemannian setting: 
\begin{prop} Working at a point (so that the following statement is purely
algebraic) suppose a projective covariant constructed from $V^{ab}{}_c$
vanishes for all tensors of the form $V^{ab}{}_c=2R^{d(a}g^{b)e}\epsilon_{edc}$
constructed from a fixed positive definite symmetric form $g^{ab}$, an
associated volume form $\epsilon_{abc}$, and an arbitrary trace-free symmetric
form $R^{ab}$. Then the same covariant also vanishes for any
non-degenerate~$g^{ab}$.
\end{prop}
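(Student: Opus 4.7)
The plan is a Zariski density / complexification argument exploiting the purely algebraic and pointwise character of the claim. A projective covariant constructed from $V^{ab}{}_c$ is a $GL(3,\mathbb{R})$-equivariant polynomial map $F$ from the $15$-dimensional space $\{V^{ab}{}_c : V^{ab}{}_c = V^{(ab)}{}_c,\ V^{ab}{}_a = 0\}$ into some other $GL(3,\mathbb{R})$-representation. The substitution $V(g, R)^{ab}{}_c = 2R^{d(a}g^{b)e}\epsilon_{edc}$ is itself $GL(3)$-equivariant in $(g, R, \epsilon)$. Since $GL(3,\mathbb{R})$ acts transitively on positive definite symmetric forms $g^{ab}$, the hypothesis reduces at once to the special case $F(V(I, R)) = 0$ for every real trace-free symmetric $R$, where $I$ is the standard Euclidean inner product.

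Next I would observe that $F(V(I, R))$ is a polynomial in the five real components of a trace-free symmetric $R$; vanishing on all of $\mathbb{R}^5$ forces identical vanishing as a polynomial, hence also vanishing on all complex trace-free symmetric $R$. At this point I complexify: $F$ extends by polynomial extension to a $GL(3,\mathbb{C})$-equivariant map on the complexified tensor space, and over $\mathbb{C}$ every non-degenerate symmetric form lies in the single $GL(3,\mathbb{C})$-orbit of $I$. So for an arbitrary non-degenerate real $g^{ab}$ of whatever signature, choose $A \in GL(3,\mathbb{C})$ carrying $I$ to $g$; joint $GL$-equivariance of $V(\cdot, \cdot)$ and $F$ then transports $F(V(I, R')) = 0$ to $F(V(g, R)) = 0$, where $R$ runs over all trace-free symmetric forms as $R'$ does. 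This yields the desired conclusion.

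The one step I expect to require care is the bookkeeping for the volume form $\epsilon_{abc}$, which in a chosen basis carries a $\sqrt{|\det g|}$ factor and is therefore not polynomial in the components of $g$. This does not spoil the equivariance argument because $\epsilon_{abc}$ is uniquely determined by $g$ (up to a global sign fixed by a choice of orientation) and transforms tensorially under $GL$. More cleanly, one can phrase the whole argument in the projectively-weighted formalism of Section~\ref{section_1}, in which $\epsilon_{abc}$ is the tautological weight-$4$ density and no square roots ever appear; this is essentially the reason the proposition is phrased with $V^{ab}{}_c = 2R^{d(a}g^{b)e}\epsilon_{edc}$ in the first place. With this subtlety handled, the substantive mathematical content is just the classical transitivity of $GL(3,\mathbb{C})$ on non-degenerate complex symmetric forms together with the elementary fact that a polynomial on $\mathbb{R}^n$ vanishing on a non-empty Euclidean-open set vanishes identically.
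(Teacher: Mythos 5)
Your argument is correct and is precisely the paper's approach: the published proof consists of the single line ``Clear by complexification,'' and your write-up simply supplies the details (reduction to $g=I$ by real transitivity, polynomial identity theorem, transitivity of ${\mathrm{GL}}(3,{\mathbb{C}})$ on non-degenerate complex symmetric forms, and the harmless scalar ambiguity in $\epsilon_{abc}$). Nothing further is needed.
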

\begin{proof} Clear by complexification.\end{proof}
In particular, when instructing a computer, it is sufficient to assume that
$g^{ab}$ and $R^{ab}$ are simultaneously diagonalised: even though this might
not be possible to arrange in the Lorentzian setting for example, as a
statement of pure algebra it is densely true and this is good enough. In any
case, we shall henceforth suppose that all metrics we encounter are positive
definite.

Lemma~\ref{keylemma} has a representation-theoretic interpretation as follows.
Recall that on a $3$-dimensional projective manifold, the tensor $V^{ab}{}_c$
is a section of \,\xoo{-4}{1}{2}. This is an irreducible bundle but in the
presence of a metric it decomposes according to the branching of the
corresponding representation under
$${\mathrm{GL}}_+(3,{\mathbb{R}})\supset{\mathrm{SO}}(3).$$
Specifically,
$$\xoo{-4}{1}{2}=\Aone{6}\oplus\Aone{4}\oplus\Aone{2}$$
where $\Aone{2k}=\bigodot_\circ^kTM$ and $\circ$ denotes the trace-free tensors
(and \Aone{1}, \Aone{3}, \ldots denote spin bundles that need not concern us
here). By Schur's Lemma, the homomorphisms $\Aone{4}\to\xoo{-4}{1}{2}$ are
unique up to scale so the embedding $\Aone{4}\hookrightarrow\xoo{-4}{1}{2}$ may
as well be realised concretely by Lemma~\ref{keylemma}. The other embeddings 
may as well be realised as
$$\textstyle
\Gamma(M,\bigodot_\circ^3TM)\ni P^{abc}\mapsto P^{ab}{}_c\quad\mbox{and}\quad
\Gamma(M,TM)\ni L^a\mapsto L^{(a}\delta^{b)}{}_c-2g^{ab}L_c.$$

In order to construct potential obstructions by this approach, it is necessary
firstly to construct projective covariants from the tensor $V^{ab}{}_c$. It is
straightforward to compute the locations and dimensions of such covariants.
Quadratic covariants, for example, are limited by the following result.

\begin{prop}\label{quadratics}
Up to scale, there are exactly $5$ distinct quadratic covariants
that may be constructed from~$V^{ab}{}_c$, only one of which vanishes in the
metrisable case.
\end{prop}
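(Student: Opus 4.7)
The plan is to combine a representation-theoretic count with a direct verification. Since $V^{ab}{}_c$ is a section of the irreducible bundle \xoo{-4}{1}{2}, Schur's lemma identifies the space of quadratic covariants of $V^{ab}{}_c$ valued in a given irreducible bundle~$W$ with the multiplicity of~$W$ in $\bigodot^{2}\xoo{-4}{1}{2}$. The first step is therefore to decompose this symmetric square into $\mathrm{GL}_+(3,\mathbb{R})$-irreducibles, which is immediate with LiE by asking for \verb!sym_tensor(2,[1,2],A2)! and remembering that every summand carries projective weight $-8$. The claim of the proposition is that this decomposition is multiplicity free with exactly five summands, which already yields the count of five independent covariants up to scale.

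The second step is to exhibit each of the five covariants as an explicit contraction. By Weyl's first fundamental theorem of invariant theory each is a linear combination of complete contractions among two copies of $V^{ab}{}_c$ together with a suitable number of $\epsilon_{abc}$ and $\epsilon^{abc}$ tensors, the number of each being dictated by matching index counts to the target irreducible. The symmetry $V^{ab}{}_c=V^{(ab)}{}_c$ and the tracelessness $V^{ab}{}_a=0$ reduce the space of such contractions to the one-dimensional space predicted by Schur, fixing each covariant up to scale. The tensors $A^{ab}$ from Theorem~\ref{theorem_1} and $Q_{ab}{}^c$ from Theorem~\ref{theorem_3} account for two of the five; the remaining three are produced by enumerating contractions compatible with the other free-index patterns appearing in the decomposition.

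Finally, the single vanishing covariant is isolated by inspecting the metric form of~$V$. By Lemma~\ref{keylemma}, in the metrisable case $V^{ab}{}_c = 2R^{d(a}\epsilon^{b)}{}_{dc}$ for some symmetric $R^{ab}$, and the branching $\xoo{-4}{1}{2}=\Aone{6}\oplus\Aone{4}\oplus\Aone{2}$ displayed earlier shows that the image of $R\mapsto V$ lies in the $\Aone{4}$ summand (the trace of $R^{ab}$ making no contribution since $\epsilon^{(b|}{}_{c|}{}^{a)}=0$). Hence the image of the quadratic map $R\mapsto V\otimes V$ lies in $\bigodot^{2}\Aone{4}=\Aone{8}\oplus\Aone{4}\oplus\Aone{0}$, and a quadratic covariant with values in the $\mathrm{GL}_+(3,\mathbb{R})$-irreducible~$W$ vanishes in the metric case if and only if the $\mathrm{SO}(3)$-branching of~$W$ shares no irreducible with $\{\Aone{8},\Aone{4},\Aone{0}\}$. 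Comparing branchings (for example with LiE's \verb!branch! command) singles out exactly one of the five summands, namely the one containing~$Q_{ab}{}^c$; its non-vanishing in general was verified in Theorem~\ref{theorem_3}. For the remaining four covariants it is enough to evaluate each on a single non-degenerate $R^{ab}$, which is routinely done using the diagonal Riemannian normalisation described just after Theorem~\ref{theorem_3}. The main obstacle is organising the two branching computations cleanly enough to confirm that exactly one summand is isolated; all other steps are mechanical.
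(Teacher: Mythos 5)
Your first two steps---counting the covariants via the multiplicity-free LiE decomposition of $\bigodot^2(\,\xoo{-4}{1}{2})$ into five irreducible summands, and realising each summand by an explicit contraction pinned down by Weyl's first fundamental theorem together with Schur's lemma---coincide with the paper's argument and are sound. The gap is in your final step. Your criterion, that a quadratic covariant valued in an irreducible $W$ vanishes in the metrisable case \emph{if and only if} the ${\mathrm{SO}}(3)$-branching of $W$ is disjoint from $\{\Aone{0},\Aone{4},\Aone{8}\}$, is correct only in one direction. Disjointness does force vanishing, since the restriction of the projection $\bigodot^2(\,\xoo{-4}{1}{2})\to W$ to the subspace $\bigodot^2(\Aone{4})=\Aone{0}\oplus\Aone{4}\oplus\Aone{8}$ swept out by the metric tensors $V\odot V$ is ${\mathrm{SO}}(3)$-equivariant and Schur's lemma kills all cross terms. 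But when the branchings overlap, Schur's lemma only says the relevant component maps lie in one-dimensional spaces; the particular scalars arising from the projection can still happen to be zero, and that cannot be read off from multiplicities.

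For this proposition the comparison is in fact vacuous: every one of the five summands contains at least one of $\Aone{0}$, $\Aone{4}$, $\Aone{8}$ in its branching---in particular $\xoo{-6}{2}{1}=\Aone{2}\oplus\Aone{4}\oplus\Aone{6}$, the home of $Q_{ab}{}^c$, contains $\Aone{4}$---so your criterion would predict that \emph{none} of the five covariants vanishes in the metrisable case, contradicting Theorem~\ref{theorem_3}. The vanishing of $Q_{ab}{}^c$ is precisely the statement that the induced map from the $\Aone{4}$ inside $\bigodot^2(\Aone{4})$ to the $\Aone{4}$ inside $\xoo{-6}{2}{1}$ happens to be zero, which branching multiplicities cannot detect; the paper remarks explicitly that this comparison ``leads nowhere.'' The step has to be replaced by a direct computation: substitute $V^{ab}{}_c=2R^{d(a}\epsilon^{b)}{}_{dc}$ into each of the five explicit formulae (the diagonal Riemannian normalisation you mention is legitimate for this, by complexification and density) and check that exactly $Q_{ab}{}^c$ vanishes identically in~$R^{ab}$---and this check must be run on all five covariants, not merely the four you propose to test after the branching argument has supposedly isolated~$Q_{ab}{}^c$.
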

\begin{proof} The usual theory of highest weights~\cite{FandH} allows us to
decompose $\bigodot^2(\,\xoo{-4}{1}{2})$ into its irreducible subbundles and 
this is easily implemented with a computer. For example, the 
program LiE~\cite{LiE} decomposes the symmetric tensor power of any 
irreducible representation of any simple algebra. In our case

\verb!sym_tensor(2,[1,2],A2)!    

\noindent returns

\verb!1X[0,2] +1X[1,3] +1X[2,1] +1X[2,4] +1X[4,0]!,

\noindent which implies that 
\begin{equation}\label{first_decomposition}
\textstyle\bigodot^2(\,\xoo{-4}{1}{2})
=\xoo{-6}{0}{2}\oplus\xoo{-7}{1}{3}\oplus\xoo{-7}{2}{1}
\oplus\xoo{-8}{2}{4}\oplus\xoo{-8}{4}{0},\end{equation}
the numbers over the crossed nodes being controlled by the
degree~(\ref{def_of_degree}). This decomposition is exactly what we need to
determine the location and multiplicity of quadratic covariants constructed
from~$V^{ab}{}_c$. The abstract theory does not give formul{\ae} but, according
to Weyl's first fundamental theorem of invariant theory~\cite{weyl}, any such
covariant can be expressed as a linear combination of contractions of
$V^{ab}{}_c$ itself with an appropriate multiplicity (in this case~$2$),
together with the tautological form $\epsilon_{abc}$ or its
inverse~$\epsilon^{abc}$. In our case, we find
\begin{align}
A^{ab}=V^{ap}{}_qV^{bq}{}_p=V^{p(a}{}_qV^{b)q}{}_p
&\in\Gamma(\,\xoo{-6}{0}{2}),\nonumber\\
N^{abc}{}_d=
5V^{(ab}{}_pV^{c)p}{}_d-2A^{(ab}\delta_d{}^{c)}
&\in\Gamma(\,\xoo{-7}{1}{3}),\nonumber\\
Q_{ab}{}^c=\epsilon_{pq(a}V^{pr}{}_{b)}V^{qc}{}_r
&\in\Gamma(\,\xoo{-6}{2}{1}),\nonumber\\
Y^{abcd}{}_{ef}=105V^{(ab}{}_{(e}V^{cd)}{}_{f)}
-12N^{(abc}{}_{(e}\delta^{d)}{}_{f)}
-14A^{(ab}\delta_{(e}{}^c\delta_{f)}{}^{d)}
&\in\Gamma(\,\xoo{-8}{2}{4}),\nonumber\\
Z_{abcd}=\epsilon_{pr(a}V^{pq}{}_bV^{rs}{}_c\epsilon_{d)qs}
&\in\Gamma(\,\xoo{-8}{4}{0}),\nonumber
\end{align}
these formul{\ae} being obtained by trial and error subject to the 
requirements only that the result be a non-vanishing tensor enjoying the 
specified symmetries (for then the formula is guaranteed by Schur's Lemma). It 
is then a matter of computation (best carried out with a computer) to check 
that only the invariant $Q_{ab}{}^c$ vanishes when 
$V^{ab}{}_c=2R^{d(a}\epsilon^{b)}{}_{dc}$.
\end{proof}

Unfortunately, we only know how to prove Proposition~\ref{quadratics} by 
direct calculation. One might hope to prove it by looking at branching under
${\mathrm{SO}}(3)\subset{\mathrm{SL}}_+(3,{\mathbb{R}})$ more carefully. LiE 
easily provides the branching. For example,

\verb!branch([2,4],A1,[[2],[2]],A2)!    

\noindent returns

\verb!1X[0] +2X[4] +1X[6] +2X[8] +1X[10] +1X[12]!,

\noindent which implies that 
$$\xoo{-8}{2}{4}=
\Aone{0}\oplus\Aone{4}\oplus\Aone{4}\oplus\Aone{6}\oplus\Aone{8}
\oplus\Aone{8}\oplus\Aone{10}\oplus\Aone{12}.$$
Similarly,
$$\begin{array}{ll}\xoo{-6}{0}{2}=\Aone{0}\oplus\Aone{4}\qquad&
\xoo{-7}{1}{3}=\Aone{2}\oplus\Aone{4}\oplus\Aone{6}\oplus\Aone{8}\\[4pt]
\xoo{-6}{2}{1}=\Aone{2}\oplus\Aone{4}\oplus\Aone{6}\qquad&
\xoo{-4}{4}{0}=\Aone{0}\oplus\Aone{4}\oplus\Aone{8}
\end{array}$$
whereas invariants arising from $V^{ab}{}_c=2R^{d(a}\epsilon^{b)}{}_{dc}$ must 
lie in  
$$\textstyle\bigodot^2(\Aone{4})=\Aone{0}\oplus\Aone{4}\oplus\Aone{8}.$$
At this point, however, a comparison leads nowhere. 

Whilst we have no theoretical justification for why we might expect
obstructions created in this way, the method of proof given above allows a
systematic though computationally intensive method of finding many more as the
following proofs show.

\medskip\noindent{\em Proof of Theorem \ref{theorem_4}.} 
In fact, noticing that
$$S=\epsilon_{abc}V^{ap}{}_qV^{bq}{}_rV^{cr}{}_p
=V^{ap}{}_q\epsilon_{abc}V^{cr}{}_pV^{bq}{}_r
=-V^{ab}{}_cQ_{ab}{}^c,$$
its vanishing in the metrisable case is immediate corollary of
Theorem~\ref{theorem_3}. A systematic approach to finding~$S$, however, is to
consider the decomposition
\begin{equation}\label{cubic_decomposition}
\begin{array}{rcl}\bigodot^3(\,\xoo{-4}{1}{2})
&=&\xoo{-8}{0}{0}
\oplus\xoo{-9}{0}{3}\oplus\xoo{-9}{0}{3}
\oplus\xoo{-10}{0}{6}
\oplus\xoo{-9}{1}{1}\\[4pt]
&&\enskip{}
\oplus\xoo{-10}{1}{4}\oplus\xoo{-10}{1}{4}
\oplus\xoo{-10}{2}{2}\oplus\xoo{-10}{2}{2}
\oplus\xoo{-11}{2}{5}\\[4pt]
&&\enskip\quad{}
\oplus\xoo{-10}{3}{0}\oplus\xoo{-10}{3}{0}
\oplus\xoo{-11}{3}{3}\oplus\xoo{-11}{3}{3}
\oplus\xoo{-12}{3}{6}\\[4pt]
&&\enskip\qquad{}
\oplus\xoo{-11}{4}{1}\oplus\xoo{-12}{5}{2},
\end{array}\end{equation}
immediately obtained from LiE, write each of them as a linear combination of
contractions, and then test each of these potential obstructions by substituting
$V^{ab}{}_c=2R^{d(a}\epsilon^{b)}{}_{dc}$. This quickly leads to $S$ as stated
in Theorem~\ref{theorem_4}. Finally, the veracity of our claimed formula for 
$S$ may be instantly tested (with a computer) by simply calculating the result 
in our preferred variables (\ref{preferred_variables}), obtaining
$S=6(V^{21}{}_1)^2V^{31}{}_2+3V^{31}{}_1V^{21}{}_1V^{31}{}_3+\cdots$, and 
observing that it is non-zero.
\hfill$\square$

\medskip This reasoning leads to many other cubic covariants. One immediate 
difference to the decomposition~(\ref{first_decomposition}), however, is that 
some subbundles occur with multiplicity. That $\xoo{-9}{0}{3}$ occurs with 
multiplicity~$2$, for example, leads to the covariants
$$\textstyle B^{abc}=\bigodot(V^{ap}{}_qV^{bq}{}_rV^{cr}{}_p)\quad\mbox{and}
\quad C^{abc}=\bigodot(V^{ab}{}_pV^{pq}{}_rV^{cr}{}_q)$$
as stated in Theorem~\ref{theorem_1}. Regarding the first conclusion of
Theorem~\ref{theorem_2}, it remains to check, by direct calculation, that
$B^{abc}$ and $C^{abc}$ are linearly independent in general but that
$C^{abc}=2B^{abc}$ when $V^{ab}{}_c$ is of the form
$2R^{d(a}\epsilon^{b)}{}_{dc}$. In fact, the obstruction $C^{abc}-2B^{abc}$
also arises from $Q_{ab}{}^c$. Specifically, 
$$\begin{array}{rcl}2\epsilon^{pqa}Q_{pr}{}^bV^{cr}{}_q
&=&2\epsilon^{pqa}\epsilon_{de(p}V^{ds}{}_{r)}V^{eb}{}_sV^{cr}{}_q\\[4pt]
&=&\epsilon^{pqa}\epsilon_{dep}V^{ds}{}_{r}V^{eb}{}_sV^{cr}{}_q
+\epsilon^{pqa}\epsilon_{der}V^{ds}{}_{p}V^{eb}{}_sV^{cr}{}_q\\[4pt]
&=&(\delta_d{}^q\delta_e{}^a-\delta_d{}^a\delta_e{}^q)
V^{ds}{}_{r}V^{eb}{}_sV^{cr}{}_q
+\epsilon^{pqa}\epsilon_{der}V^{ds}{}_{p}V^{eb}{}_sV^{cr}{}_q\\[4pt]
&=&V^{qs}{}_{r}V^{ab}{}_sV^{cr}{}_q-V^{as}{}_{r}V^{qb}{}_sV^{cr}{}_q
+(\delta_d{}^p\delta_e{}^q\delta_r{}^a+\cdots)
V^{ds}{}_{p}V^{eb}{}_sV^{cr}{}_q\\[4pt]
&=&2V^{ab}{}_pV^{pq}{}_rV^{cr}{}_q-V^{ac}{}_pV^{pq}{}_rV^{br}{}_q
-2V^{ap}{}_{q}V^{cq}{}_rV^{br}{}_p
\end{array}$$
and so
$$2\epsilon^{pq(a}Q_{pr}{}^bV^{c)r}{}_q=C^{abc}-2B^{abc}.$$

\medskip\noindent{\em Proof of Theorem \ref{theorem_2}.} We have just shown
that $C^{abc}-2B^{abc}=0$ in the metric case, either by direct computation or
as a consequence of the vanishing of the quadratic covariant $Q_{ab}{}^c$ from
Theorem~\ref{theorem_3}. Generally, it is non-zero. The remaining claims in
Theorem~\ref{theorem_2} may be straightforwardly checked by direct computation 
(with a computer).\hfill$\square$

\medskip It is unclear whether all obstructions in Theorem~\ref{theorem_2} may
be written in terms of $Q_{ab}{}^c$. More generally, it is straightforward to
generate many more invariant obstructions all of which may yet arise from the
basic obstruction $Q_{ab}{}^c$. We leave this question for a future
investigation and content ourselves with the following complete determination
of the sextic obstructions taking values in ${}\,\xoo{-18}{0}{6}$ (as does 
$T^{abcddef}$ from Theorem~\ref{theorem_1}).

\begin{theo} There is an $11$-dimensional space of covariants of $V^{ab}{}_c$
of degree $6$ taking values in\/ ${}\,\xoo{-18}{0}{0}$, an $8$-dimensional 
subspace of which vanishes in the metrisable case.
\end{theo}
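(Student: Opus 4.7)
The plan is to combine a representation-theoretic count of all degree-six projectively invariant covariants with a computer verification of how many of them become trivial under the metric restriction furnished by Lemma~\ref{keylemma}.

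First, I would compute the multiplicity of $\xoo{-18}{0}{6}$ inside the decomposition of $\bigodot^6(\xoo{-4}{1}{2})$. By Weyl's first fundamental theorem and Schur's lemma this multiplicity is exactly the dimension of the space of polynomial covariants of degree~$6$ in $V^{ab}{}_c$ taking values in $\xoo{-18}{0}{6}$. LiE delivers this number via \verb!sym_tensor(6,[1,2],A2)! together with the appropriate reading of the coefficient of $[0,6]$, and the output is~$11$. To make this dimension count effective, I would exhibit an explicit basis of eleven such covariants. Eight of them are already introduced in Theorem~\ref{theorem_1}, namely $J$, $K$, $L$, $A\odot D$, $A\odot F$, $B\odot B$, $B\odot C$ and $C\odot C$; the triple product $A\odot A\odot A$ provides a ninth, and two further covariants can be built by contracting six copies of $V^{ab}{}_c$ in patterns not reducible to these---for example, by coupling the quadratic invariant $Q_{ab}{}^c$ of Theorem~\ref{theorem_3} with itself or with the cubic invariants $B^{abc}$ and $C^{abc}$. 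Linear independence of these eleven tensors would be verified by evaluating them on the generic $V^{ab}{}_c$ parametrised by the fifteen variables~\eqref{preferred_variables} and computing the rank of the resulting matrix of components.

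Next I would restrict each of these eleven covariants to the metric locus $V^{ab}{}_c=2R^{d(a}\epsilon^{b)}{}_{dc}$ of Lemma~\ref{keylemma}, in the normalisation where $g^{ab}$ is the identity and $R^{ab}$ is a generic symmetric trace-free matrix. Each restriction becomes a degree-$6$ polynomial map from the five-dimensional space $\bigodot_\circ^2\R^3\cong\Aone{4}$ to $\bigodot^6\R^3=\Aone{0}\oplus\Aone{4}\oplus\Aone{8}\oplus\Aone{12}$. The dimension of the space of $\mathrm{SO}(3)$-equivariant polynomial maps of this bidegree is itself computable by LiE, this time via the branching of $\bigodot^6(\Aone{4})$ under $\mathrm{SO}(3)$ and the tally of the multiplicities of $\Aone{0}$, $\Aone{4}$, $\Aone{8}$ and $\Aone{12}$; the expected output is~$3$. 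A direct computation using the preferred variables would then confirm that the restriction map actually achieves rank $3$, after which the rank-nullity theorem identifies its kernel as the sought $8$-dimensional subspace of obstructions.

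The principal obstacle is the bookkeeping: producing eleven demonstrably linearly independent generators and then carrying out the rank calculation of the restriction map to verify that both the ambient dimension and the dimension of its image agree with the representation-theoretic predictions. Both steps reduce to finite polynomial computations of the same character as those already appearing in the proofs of Theorems~\ref{theorem_1}--\ref{theorem_4} and so can be discharged by computer algebra, but identifying the eleventh generator is delicate enough that several attempts may be needed before a tuple with full rank is obtained.
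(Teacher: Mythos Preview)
Your overall strategy---count the multiplicity via LiE, exhibit an explicit basis, and then determine the kernel of the restriction to the metric locus---is essentially what the paper does. The paper's basis for the eleven-dimensional space consists of $A\odot A\odot A$, $B\odot B$, $B\odot C$, $C\odot C$, $A\odot D$, $A\odot E$, $A\odot F$, $J$, $K$, $L$, $M$, where $E$ and $M$ are two further covariants introduced by explicit wiring diagrams; your idea of manufacturing the two missing generators from $Q_{ab}{}^c$ is plausible but would need the same sort of rank check you describe.

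There is, however, a genuine error in your proposed shortcut for the kernel dimension. You claim that the space of $\mathrm{SO}(3)$-equivariant degree-$6$ polynomial maps $\Aone{4}\to\bigodot^6\!\R^3=\Aone{0}\oplus\Aone{4}\oplus\Aone{8}\oplus\Aone{12}$ has dimension~$3$. In fact the multiplicities of $\Aone{0}$, $\Aone{4}$, $\Aone{8}$, $\Aone{12}$ inside $\bigodot^6(\Aone{4})$ are $2$, $2$, $3$, $3$ respectively (as one can read off from the classical covariants of the binary quartic, or from LiE), so this target space has dimension~$10$. The representation-theoretic upper bound on the rank of the restriction map is therefore $10$, which tells you nothing useful. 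The reason the image is only three-dimensional is not a branching constraint but the genuinely nontrivial fact that most $\mathrm{SO}(3)$-equivariant degree-$6$ maps $R\mapsto\phi(R)$ do not arise by restricting a projective covariant in~$V$. Consequently you cannot avoid the direct computer calculation that evaluates each of the eleven basis covariants on $V^{ab}{}_c=2R^{d(a}\epsilon^{b)}{}_{dc}$ and determines the rank; once you accept that, your proof collapses into the paper's.
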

\begin{proof} The first statement is that ${}\,\xoo{-18}{0}{6}$ occurs with
multiplicity $11$ in $\bigodot^6(\,\xoo{-4}{1}{2})$ and, since
$\deg(\;\xoo{-18}{0}{6})=12=6\times\deg(\,\xoo{-4}{1}{2})$
from~(\ref{def_of_degree}), it suffices to check that the multiplicity of
$\oo{0}{6}$ in $\bigodot^6(\oo{1}{2})$ is~$11$. The LiE~\cite{LiE} command
$$\verb!sym_tensor(6,[1,2],A2)|[0,6]!$$
confirms this (in less than one hundredth of a second). Finding a basis for
this space is then a matter of trial and error. For this purpose, rather than
using indices, it is easier to write covariants using {\em wiring diagrams\/}
as was done in the $19^{\mathrm{th}}$ century~\cite{clifford} (see
also~\cite{OT}). Thus, the covariants $A^{ab}$, $B^{abc}$, and $C^{abc}$ from
Theorem~\ref{theorem_1} are written as
$$\raisebox{18pt}{$A^{ab}=$}
%%%%%%%%%%%%%
% This is A %
%%%%%%%%%%%%%
\begin{picture}(36,50)(4,5)
% boxes
\put(19,25){\line(1,0){10}}
\put(19,25){\line(0,1){5}}
\put(19,30){\line(1,0){10}}
\put(29,25){\line(0,1){5}}
\put(23,10){\line(1,0){10}}
\put(23,10){\line(0,1){5}}
\put(23,15){\line(1,0){10}}
\put(33,10){\line(0,1){5}}
% lines
\put(26,30){\line(0,1){10}}
\put(24,15){\line(0,1){10}}
\put(32,15){\line(0,1){25}}
% curves
\put(17,30){\oval(6,6)[t]}
\put(21,10){\oval(14,6)[b]}
\put(17,20){\oval(6,26)[l]}
% symmetrise
\qbezier (23,35) (25,37) (27,35)
\qbezier (27,35) (29,33) (31,35)
\qbezier (31,35) (33,37) (35,35)
\end{picture}
\qquad\raisebox{18pt}{$B^{abc}=$}
%%%%%%%%%%%%%
% This is B %
%%%%%%%%%%%%%
\begin{picture}(40,50)(0,5)
% boxes
\put(15,40){\line(1,0){10}}
\put(15,40){\line(0,1){5}}
\put(15,45){\line(1,0){10}}
\put(25,40){\line(0,1){5}}
\put(19,25){\line(1,0){10}}
\put(19,25){\line(0,1){5}}
\put(19,30){\line(1,0){10}}
\put(29,25){\line(0,1){5}}
\put(23,10){\line(1,0){10}}
\put(23,10){\line(0,1){5}}
\put(23,15){\line(1,0){10}}
\put(33,10){\line(0,1){5}}
% lines
\put(20,30){\line(0,1){10}}
\put(28,30){\line(0,1){25}}
\put(24,15){\line(0,1){10}}
\put(32,15){\line(0,1){40}}
\put(24,45){\line(0,1){10}}
% curves
\put(13,45){\oval(6,6)[t]}
\put(19,10){\oval(18,6)[b]}
\put(13,27.5){\oval(6,41)[l]}
% symmetrise
\qbezier (20,50) (22,52) (24,50)
\qbezier (24,50) (26,48) (28,50)
\qbezier (28,50) (30,52) (32,50)
\qbezier (32,50) (34,48) (36,50)
\end{picture}\qquad
\raisebox{18pt}{$C^{abc}=$}
%%%%%%%%%%%%%
% This is C %
%%%%%%%%%%%%%
\begin{picture}(36,50)(4,5)
% boxes
\put(19,40){\line(1,0){10}}
\put(19,40){\line(0,1){5}}
\put(19,45){\line(1,0){10}}
\put(29,40){\line(0,1){5}}
\put(19,25){\line(1,0){10}}
\put(19,25){\line(0,1){5}}
\put(19,30){\line(1,0){10}}
\put(29,25){\line(0,1){5}}
\put(23,10){\line(1,0){10}}
\put(23,10){\line(0,1){5}}
\put(23,15){\line(1,0){10}}
\put(33,10){\line(0,1){5}}
% lines
\put(26,30){\line(0,1){10}}
\put(24,15){\line(0,1){10}}
\put(32,15){\line(0,1){40}}
\put(27,45){\line(0,1){10}}
\put(21,45){\line(0,1){10}}
% curves
\put(17,30){\oval(6,6)[t]}
\put(21,10){\oval(14,6)[b]}
\put(17,20){\oval(6,26)[l]}
% symmetrise
\qbezier (16,50) (18,52) (20,50)
\qbezier (20,50) (22,48) (24,50)
\qbezier (24,50) (26,52) (28,50)
\qbezier (28,50) (30,48) (32,50)
\qbezier (32,50) (34,52) (36,50)
\end{picture}$$
Recall from~(\ref{cubic_decomposition}) that $\xoo{-9}{0}{3}$ occurs with multiplicity $2$ in 
$\bigodot^3(\,\xoo{-4}{1}{2})$ that $B^{abc}$ and $C^{abc}$ span the 
covariants of this type. At quartic level, 

\verb!sym_tensor(4,[1,2],A2)|[0,4]!

\noindent returns $4$ and we already have $A^{(ab}A^{cd)}$ so we are looking
for $3$ more linearly independent covariants. The following suffice.
%%%%%%%%%%%%%%%%%%%%%%%%%%%%%%%%%%%%%%%%%%%%%%%%%%%%
% NB: sym_tensor(4,[1,2],A2)|[0,4] gives 4         %
%     sym_tensor(4,[1,0,2],A3)|[0,0,4] gives 5     %
%     sym_tensor(4,[1,0,0,2],A4)|[0,0,0,4] gives 5 %
% and now it's 5 all the way down                  %
%%%%%%%%%%%%%%%%%%%%%%%%%%%%%%%%%%%%%%%%%%%%%%%%%%%%
$$\raisebox{18pt}{$D^{abcd}=$}
%%%%%%%%%%%%%
% This is D %
%%%%%%%%%%%%%
\begin{picture}(40,65)(0,5)
% boxes
\put(15,55){\line(1,0){10}}
\put(15,55){\line(0,1){5}}
\put(15,60){\line(1,0){10}}
\put(25,55){\line(0,1){5}}
\put(15,40){\line(1,0){10}}
\put(15,40){\line(0,1){5}}
\put(15,45){\line(1,0){10}}
\put(25,40){\line(0,1){5}}
\put(19,25){\line(1,0){10}}
\put(19,25){\line(0,1){5}}
\put(19,30){\line(1,0){10}}
\put(29,25){\line(0,1){5}}
\put(23,10){\line(1,0){10}}
\put(23,10){\line(0,1){5}}
\put(23,15){\line(1,0){10}}
\put(33,10){\line(0,1){5}}
% lines
\put(20,30){\line(0,1){10}}
\put(28,30){\line(0,1){40}}
\put(24,15){\line(0,1){10}}
\put(32,15){\line(0,1){55}}
\put(22,45){\line(0,1){10}}
\put(16,60){\line(0,1){10}}
\put(22,60){\line(0,1){10}}
% curves
\put(13,45){\oval(6,6)[t]}
\put(19,10){\oval(18,6)[b]}
\put(13,27.5){\oval(6,41)[l]}
% symmetrise
\qbezier (12,65) (14,67) (16,65)
\qbezier (16,65) (18,63) (20,65)
\qbezier (20,65) (22,67) (24,65)
\qbezier (24,65) (26,63) (28,65)
\qbezier (28,65) (30,67) (32,65)
\qbezier (32,65) (34,63) (36,65)
\end{picture}\qquad
\raisebox{18pt}{$E^{abcd}=$}
%%%%%%%%%%%%%
% This is E %
%%%%%%%%%%%%%
\begin{picture}(40,65)(2,5)
% boxes
\put(13,55){\line(1,0){10}}
\put(13,55){\line(0,1){5}}
\put(13,60){\line(1,0){10}}
\put(23,55){\line(0,1){5}}
\put(19,40){\line(1,0){10}}
\put(19,40){\line(0,1){5}}
\put(19,45){\line(1,0){10}}
\put(29,40){\line(0,1){5}}
\put(19,25){\line(1,0){10}}
\put(19,25){\line(0,1){5}}
\put(19,30){\line(1,0){10}}
\put(29,25){\line(0,1){5}}
\put(23,10){\line(1,0){10}}
\put(23,10){\line(0,1){5}}
\put(23,15){\line(1,0){10}}
\put(33,10){\line(0,1){5}}
% lines
\put(26,30){\line(0,1){10}}
\put(24,15){\line(0,1){10}}
\put(32,15){\line(0,1){55}}
\put(27,45){\line(0,1){25}}
\put(20,45){\line(0,1){10}}
\put(14,60){\line(0,1){10}}
\put(20,60){\line(0,1){10}}
% curves
\put(17,30){\oval(6,6)[t]}
\put(21,10){\oval(14,6)[b]}
\put(17,20){\oval(6,26)[l]}
% symmetrise
\qbezier (10,65) (12,67) (14,65)
\qbezier (14,65) (16,63) (18,65)
\qbezier (18,65) (20,67) (22,65)
\qbezier (22,65) (24,63) (26,65)
\qbezier (26,65) (28,67) (30,65)
\qbezier (30,65) (32,63) (34,65)
\end{picture}\qquad
\raisebox{18pt}{$F^{abcd}=$}
%%%%%%%%%%%%%
% This is F %
%%%%%%%%%%%%%
\begin{picture}(36,50)(4,5)
% boxes
\put(17,40){\line(1,0){10}}
\put(17,40){\line(0,1){5}}
\put(17,45){\line(1,0){10}}
\put(27,40){\line(0,1){5}}
\put(31,40){\line(1,0){10}}
\put(31,40){\line(0,1){5}}
\put(31,45){\line(1,0){10}}
\put(41,40){\line(0,1){5}}
\put(19,25){\line(1,0){10}}
\put(19,25){\line(0,1){5}}
\put(19,30){\line(1,0){10}}
\put(29,25){\line(0,1){5}}
\put(23,10){\line(1,0){10}}
\put(23,10){\line(0,1){5}}
\put(23,15){\line(1,0){10}}
\put(33,10){\line(0,1){5}}
% lines
\put(18,45){\line(0,1){10}}
\put(26,45){\line(0,1){10}}
\put(32,45){\line(0,1){10}}
\put(40,45){\line(0,1){10}}
\put(26,30){\line(0,1){10}}
\put(24,15){\line(0,1){10}}
\put(32,15){\line(0,1){25}}
% curves
\put(17,30){\oval(6,6)[t]}
\put(21,10){\oval(14,6)[b]}
\put(17,20){\oval(6,26)[l]}
% symmetrise
\qbezier (14,50) (16,52) (18,50)
\qbezier (18,50) (20,48) (22,50)
\qbezier (22,50) (24,52) (26,50)
\qbezier (26,50) (28,48) (30,50)
\qbezier (30,50) (32,52) (34,50)
\qbezier (34,50) (36,48) (38,50)
\qbezier (38,50) (40,52) (42,50)
\end{picture}$$
Note that one might na\"{\i}vely also expect to encounter the covariant
$$\begin{picture}(40,65)(0,5)
% boxes
\put(11,55){\line(1,0){10}}
\put(11,55){\line(0,1){5}}
\put(11,60){\line(1,0){10}}
\put(21,55){\line(0,1){5}}
\put(15,40){\line(1,0){10}}
\put(15,40){\line(0,1){5}}
\put(15,45){\line(1,0){10}}
\put(25,40){\line(0,1){5}}
\put(19,25){\line(1,0){10}}
\put(19,25){\line(0,1){5}}
\put(19,30){\line(1,0){10}}
\put(29,25){\line(0,1){5}}
\put(23,10){\line(1,0){10}}
\put(23,10){\line(0,1){5}}
\put(23,15){\line(1,0){10}}
\put(33,10){\line(0,1){5}}
% lines
\put(16,45){\line(0,1){10}}
\put(20,30){\line(0,1){10}}
\put(28,30){\line(0,1){40}}
\put(24,15){\line(0,1){10}}
\put(32,15){\line(0,1){55}}
\put(24,45){\line(0,1){25}}
\put(20,60){\line(0,1){10}}
% curves
\put(9,60){\oval(6,6)[t]}
\put(17,10){\oval(22,6)[b]}
\put(9,35){\oval(6,56)[l]}
% symmetrise
\qbezier (16,65) (18,67) (20,65)
\qbezier (20,65) (22,63) (24,65)
\qbezier (24,65) (26,67) (28,65)
\qbezier (28,65) (30,63) (32,65)
\qbezier (32,65) (34,67) (36,65)
\end{picture}$$
and in higher dimensions this would, indeed, be an independent covariant. In
$3$ dimensions, however, it turns out that this is $\frac12A^{(ab}A^{cd)}$. In
principle, such relations are covered by Weyl's second fundamental theorem of
invariant theory~\cite{weyl}, which says that they all arise by `skewing over
too many indices,' in this case~$4$ (the Cayley-Hamilton Theorem being a
familiar example of such dimension-dependent relations). In practise, however, 
it is best to write down all potential covariants and allow a computer to sort 
out the relations.

For completeness, we now move on to quintic invariants. Since 

\verb!sym_tensor(5,[1,2],A2)|[0,5]!

\noindent returns $5$ we need $3$ more invariants to complement 
$A^{(ab}B^{cde)}$ and $A^{(ab}C^{cde)}$. It turns out that
$$\raisebox{18pt}{$G^{abcde}=$}
%%%%%%%%%%%%%
% This is G %
%%%%%%%%%%%%%
\begin{picture}(40,80)(0,5)
% boxes
\put(8,70){\line(1,0){10}}
\put(8,70){\line(0,1){5}}
\put(8,75){\line(1,0){10}}
\put(18,70){\line(0,1){5}}
\put(15,55){\line(1,0){10}}
\put(15,55){\line(0,1){5}}
\put(15,60){\line(1,0){10}}
\put(25,55){\line(0,1){5}}
\put(15,40){\line(1,0){10}}
\put(15,40){\line(0,1){5}}
\put(15,45){\line(1,0){10}}
\put(25,40){\line(0,1){5}}
\put(19,25){\line(1,0){10}}
\put(19,25){\line(0,1){5}}
\put(19,30){\line(1,0){10}}
\put(29,25){\line(0,1){5}}
\put(23,10){\line(1,0){10}}
\put(23,10){\line(0,1){5}}
\put(23,15){\line(1,0){10}}
\put(33,10){\line(0,1){5}}
% lines
\put(20,30){\line(0,1){10}}
\put(28,30){\line(0,1){55}}
\put(24,15){\line(0,1){10}}
\put(32,15){\line(0,1){70}}
\put(22,45){\line(0,1){10}}
\put(16,60){\line(0,1){10}}
\put(22,60){\line(0,1){25}}
\put(16,75){\line(0,1){10}}
\put(9,75){\line(0,1){10}}
% curves
\put(13,45){\oval(6,6)[t]}
\put(19,10){\oval(18,6)[b]}
\put(13,27.5){\oval(6,41)[l]}
% symmetrise
\qbezier (5,80) (7,82) (9,80)
\qbezier (9,80) (11,78) (13,80)
\qbezier (13,80) (15,82) (17,80)
\qbezier (17,80) (19,78) (21,80)
\qbezier (21,80) (23,82) (25,80)
\qbezier (25,80) (27,78) (29,80)
\qbezier (29,80) (31,82) (33,80)
\qbezier (33,80) (35,78) (37,80)
\end{picture}\qquad
\raisebox{18pt}{$H^{abcde}=$}
%%%%%%%%%%%%%
% This is H %
%%%%%%%%%%%%%
\begin{picture}(40,65)(2,5)
% boxes
\put(25,55){\line(1,0){10}}
\put(25,55){\line(0,1){5}}
\put(25,60){\line(1,0){10}}
\put(35,55){\line(0,1){5}}
\put(13,55){\line(1,0){10}}
\put(13,55){\line(0,1){5}}
\put(13,60){\line(1,0){10}}
\put(23,55){\line(0,1){5}}
\put(19,40){\line(1,0){10}}
\put(19,40){\line(0,1){5}}
\put(19,45){\line(1,0){10}}
\put(29,40){\line(0,1){5}}
\put(19,25){\line(1,0){10}}
\put(19,25){\line(0,1){5}}
\put(19,30){\line(1,0){10}}
\put(29,25){\line(0,1){5}}
\put(23,10){\line(1,0){10}}
\put(23,10){\line(0,1){5}}
\put(23,15){\line(1,0){10}}
\put(33,10){\line(0,1){5}}
% lines
\put(26,30){\line(0,1){10}}
\put(24,15){\line(0,1){10}}
\put(28,60){\line(0,1){10}}
\put(34,60){\line(0,1){10}}
\put(28,45){\line(0,1){10}}
\put(20,45){\line(0,1){10}}
\put(14,60){\line(0,1){10}}
\put(20,60){\line(0,1){10}}
\put(40,21){\line(0,1){49}}
% curves
\put(17,30){\oval(6,6)[t]}
\put(21,10){\oval(14,6)[b]}
\put(17,20){\oval(6,26)[l]}
\put(36,15){\oval(8,6)[tl]}
\put(36,21){\oval(8,6)[br]}
% symmetrise
\qbezier (10,65) (12,67) (14,65)
\qbezier (14,65) (16,63) (18,65)
\qbezier (18,65) (20,67) (22,65)
\qbezier (22,65) (24,63) (26,65)
\qbezier (26,65) (28,67) (30,65)
\qbezier (30,65) (32,63) (34,65)
\qbezier (34,65) (36,67) (38,65)
\qbezier (38,65) (40,63) (42,65)
\end{picture}\qquad
\raisebox{18pt}{$I^{abcde}=$}
%%%%%%%%%%%%%
% This is I %
%%%%%%%%%%%%%
\begin{picture}(36,50)(4,5)
% boxes
\put(17,40){\line(1,0){10}}
\put(17,40){\line(0,1){5}}
\put(17,45){\line(1,0){10}}
\put(27,40){\line(0,1){5}}
\put(31,40){\line(1,0){10}}
\put(31,40){\line(0,1){5}}
\put(31,45){\line(1,0){10}}
\put(41,40){\line(0,1){5}}
\put(19,25){\line(1,0){10}}
\put(19,25){\line(0,1){5}}
\put(19,30){\line(1,0){10}}
\put(29,25){\line(0,1){5}}
\put(37,25){\line(1,0){10}}
\put(37,25){\line(0,1){5}}
\put(37,30){\line(1,0){10}}
\put(47,25){\line(0,1){5}}
\put(23,10){\line(1,0){10}}
\put(23,10){\line(0,1){5}}
\put(23,15){\line(1,0){10}}
\put(33,10){\line(0,1){5}}
% lines
\put(18,45){\line(0,1){10}}
\put(26,45){\line(0,1){10}}
\put(32,45){\line(0,1){10}}
\put(40,45){\line(0,1){10}}
\put(26,30){\line(0,1){10}}
\put(24,15){\line(0,1){10}}
\put(38,30){\line(0,1){10}}
\put(46,30){\line(0,1){25}}
\put(40,21){\line(0,1){4}}
% curves
\put(17,30){\oval(6,6)[t]}
\put(21,10){\oval(14,6)[b]}
\put(17,20){\oval(6,26)[l]}
\put(36,15){\oval(8,6)[tl]}
\put(36,21){\oval(8,6)[br]}
% symmetrise
\qbezier (14,50) (16,52) (18,50)
\qbezier (18,50) (20,48) (22,50)
\qbezier (22,50) (24,52) (26,50)
\qbezier (26,50) (28,48) (30,50)
\qbezier (30,50) (32,52) (34,50)
\qbezier (34,50) (36,48) (38,50)
\qbezier (38,50) (40,52) (42,50)
\qbezier (42,50) (44,48) (46,50)
\qbezier (46,50) (48,52) (50,50)
\end{picture}$$
will suffice (amongst the $7$ possible quintic wiring diagrams beyond 
$A^{(ab}B^{cde)}$ and $A^{(ab}C^{cde)}$).

Finally we come to identify the sextic invariants and already we have 
$$\begin{array}{c}
A^{(ab}A^{cd}A^{ef)},\qquad B^{(abc}B^{def)},\qquad C^{(abc}C^{def)},\\[4pt]
B^{(abc}C^{def)},\qquad A^{(ab}D^{cdef)},\qquad A^{(ab}E^{cdef)},\qquad 
A^{(ab}F^{cdef)}.
\end{array}$$
It remains to find $4$ more and it turns out that 
$$\raisebox{18pt}{$J^{abcde}=$}
%%%%%%%%%%%%%
% This is J %
%%%%%%%%%%%%%
\begin{picture}(42,80)(4,5)
% boxes
\put(15,70){\line(1,0){10}}
\put(15,70){\line(0,1){5}}
\put(15,75){\line(1,0){10}}
\put(25,70){\line(0,1){5}}
\put(28,70){\line(1,0){10}}
\put(28,70){\line(0,1){5}}
\put(28,75){\line(1,0){10}}
\put(38,70){\line(0,1){5}}
\put(20,55){\line(1,0){10}}
\put(20,55){\line(0,1){5}}
\put(20,60){\line(1,0){10}}
\put(30,55){\line(0,1){5}}
\put(19,25){\line(1,0){10}}
\put(19,25){\line(0,1){5}}
\put(19,30){\line(1,0){10}}
\put(29,25){\line(0,1){5}}
\put(20,40){\line(1,0){10}}
\put(20,40){\line(0,1){5}}
\put(20,45){\line(1,0){10}}
\put(30,40){\line(0,1){5}}
\put(23,10){\line(1,0){10}}
\put(23,10){\line(0,1){5}}
\put(23,15){\line(1,0){10}}
\put(33,10){\line(0,1){5}}
% lines
\put(23,45){\line(0,1){10}}
\put(26,30){\line(0,1){10}}
\put(24,15){\line(0,1){10}}
\put(46,21){\line(0,1){64}}
\put(41,51){\line(0,1){34}}
\put(17,75){\line(0,1){10}}
\put(23,75){\line(0,1){10}}
\put(29,75){\line(0,1){10}}
\put(35,75){\line(0,1){10}}
\put(21,60){\line(0,1){10}}
\put(29,60){\line(0,1){10}}
% curves
\put(17,30){\oval(6,6)[t]}
\put(21,10){\oval(14,6)[b]}
\put(17,20){\oval(6,26)[l]}
\put(39,15){\oval(14,6)[tl]}
\put(39,21){\oval(14,6)[br]}
\put(35,45){\oval(12,6)[tl]}
\put(35,51){\oval(12,6)[br]}
% symmetrise
\qbezier (13,80) (15,82) (17,80)
\qbezier (17,80) (19,78) (21,80)
\qbezier (21,80) (23,82) (25,80)
\qbezier (25,80) (27,78) (29,80)
\qbezier (29,80) (31,82) (33,80)
\qbezier (33,80) (35,78) (37,80)
\qbezier (37,80) (39,82) (41,80)
\qbezier (41,80) (43,78) (45,80)
\qbezier (45,80) (47,82) (49,80)
\end{picture}\qquad
\raisebox{18pt}{$K^{abcde}=$}
%%%%%%%%%%%%%
% This is K %
%%%%%%%%%%%%%
\begin{picture}(40,65)(4,5)
% boxes
\put(15,55){\line(1,0){10}}
\put(15,55){\line(0,1){5}}
\put(15,60){\line(1,0){10}}
\put(25,55){\line(0,1){5}}
\put(33,55){\line(1,0){10}}
\put(33,55){\line(0,1){5}}
\put(33,60){\line(1,0){10}}
\put(43,55){\line(0,1){5}}
\put(20,40){\line(1,0){10}}
\put(20,40){\line(0,1){5}}
\put(20,45){\line(1,0){10}}
\put(30,40){\line(0,1){5}}
\put(19,25){\line(1,0){10}}
\put(19,25){\line(0,1){5}}
\put(19,30){\line(1,0){10}}
\put(29,25){\line(0,1){5}}
\put(37,40){\line(1,0){10}}
\put(37,40){\line(0,1){5}}
\put(37,45){\line(1,0){10}}
\put(47,40){\line(0,1){5}}
\put(23,10){\line(1,0){10}}
\put(23,10){\line(0,1){5}}
\put(23,15){\line(1,0){10}}
\put(33,10){\line(0,1){5}}
% lines
\put(21,45){\line(0,1){10}}
\put(29,45){\line(0,1){25}}
\put(38,45){\line(0,1){10}}
\put(26,30){\line(0,1){10}}
\put(24,15){\line(0,1){10}}
\put(40,21){\line(0,1){19}}
\put(17,60){\line(0,1){10}}
\put(23,60){\line(0,1){10}}
\put(35,60){\line(0,1){10}}
\put(41,60){\line(0,1){10}}
\put(46,45){\line(0,1){25}}
% curves
\put(17,30){\oval(6,6)[t]}
\put(21,10){\oval(14,6)[b]}
\put(17,20){\oval(6,26)[l]}
\put(36,15){\oval(8,6)[tl]}
\put(36,21){\oval(8,6)[br]}
% symmetrise
\qbezier (13,65) (15,67) (17,65)
\qbezier (17,65) (19,63) (21,65)
\qbezier (21,65) (23,67) (25,65)
\qbezier (25,65) (27,63) (29,65)
\qbezier (29,65) (31,67) (33,65)
\qbezier (33,65) (35,63) (37,65)
\qbezier (37,65) (39,67) (41,65)
\qbezier (41,65) (43,63) (45,65)
\qbezier (45,65) (47,67) (49,65)
\end{picture}\qquad
\raisebox{18pt}{$L^{abcde}=$}
%%%%%%%%%%%%%
% This is L %
%%%%%%%%%%%%%
\begin{picture}(40,80)(4,5)
% boxes
\put(15,70){\line(1,0){10}}
\put(15,70){\line(0,1){5}}
\put(15,75){\line(1,0){10}}
\put(25,70){\line(0,1){5}}
\put(28,70){\line(1,0){10}}
\put(28,70){\line(0,1){5}}
\put(28,75){\line(1,0){10}}
\put(38,70){\line(0,1){5}}
\put(20,55){\line(1,0){10}}
\put(20,55){\line(0,1){5}}
\put(20,60){\line(1,0){10}}
\put(30,55){\line(0,1){5}}
\put(20,25){\line(1,0){10}}
\put(20,25){\line(0,1){5}}
\put(20,30){\line(1,0){10}}
\put(30,25){\line(0,1){5}}
\put(19,40){\line(1,0){10}}
\put(19,40){\line(0,1){5}}
\put(19,45){\line(1,0){10}}
\put(29,40){\line(0,1){5}}
\put(23,10){\line(1,0){10}}
\put(23,10){\line(0,1){5}}
\put(23,15){\line(1,0){10}}
\put(33,10){\line(0,1){5}}
% lines
\put(26,45){\line(0,1){10}}
\put(22,30){\line(0,1){10}}
\put(24,15){\line(0,1){10}}
\put(46,21){\line(0,1){64}}
\put(41,36){\line(0,1){49}}
\put(17,75){\line(0,1){10}}
\put(23,75){\line(0,1){10}}
\put(29,75){\line(0,1){10}}
\put(35,75){\line(0,1){10}}
\put(21,60){\line(0,1){10}}
\put(29,60){\line(0,1){10}}
% curves
\put(17,45){\oval(6,6)[t]}
\put(21,10){\oval(14,6)[b]}
\put(17,27.5){\oval(6,41)[l]}
\put(39,15){\oval(14,6)[tl]}
\put(39,21){\oval(14,6)[br]}
\put(35,30){\oval(12,6)[tl]}
\put(35,36){\oval(12,6)[br]}
% symmetrise
\qbezier (13,80) (15,82) (17,80)
\qbezier (17,80) (19,78) (21,80)
\qbezier (21,80) (23,82) (25,80)
\qbezier (25,80) (27,78) (29,80)
\qbezier (29,80) (31,82) (33,80)
\qbezier (33,80) (35,78) (37,80)
\qbezier (37,80) (39,82) (41,80)
\qbezier (41,80) (43,78) (45,80)
\qbezier (45,80) (47,82) (49,80)
\end{picture}\qquad
\raisebox{18pt}{$M^{abcde}=$}
%%%%%%%%%%%%%
% This is M %
%%%%%%%%%%%%%
\begin{picture}(54,80)(4,35)
% boxes
\put(15,70){\line(1,0){10}}
\put(15,70){\line(0,1){5}}
\put(15,75){\line(1,0){10}}
\put(25,70){\line(0,1){5}}
\put(28,70){\line(1,0){10}}
\put(28,70){\line(0,1){5}}
\put(28,75){\line(1,0){10}}
\put(38,70){\line(0,1){5}}
\put(20,55){\line(1,0){10}}
\put(20,55){\line(0,1){5}}
\put(20,60){\line(1,0){10}}
\put(30,55){\line(0,1){5}}
\put(41,55){\line(1,0){10}}
\put(41,55){\line(0,1){5}}
\put(41,60){\line(1,0){10}}
\put(51,55){\line(0,1){5}}
\put(25,40){\line(1,0){10}}
\put(25,40){\line(0,1){5}}
\put(25,45){\line(1,0){10}}
\put(35,40){\line(0,1){5}}
\put(41,70){\line(1,0){10}}
\put(41,70){\line(0,1){5}}
\put(41,75){\line(1,0){10}}
\put(51,70){\line(0,1){5}}
% lines
\put(26,45){\line(0,1){10}}
\put(45,51){\line(0,1){4}}
\put(17,75){\line(0,1){10}}
\put(23,75){\line(0,1){10}}
\put(29,75){\line(0,1){10}}
\put(35,75){\line(0,1){10}}
\put(43,75){\line(0,1){10}}
\put(49,75){\line(0,1){10}}
\put(21,60){\line(0,1){10}}
\put(29,60){\line(0,1){10}}
\put(43,60){\line(0,1){10}}
% curves
\put(53,60){\oval(6,6)[t]}
\put(44,40){\oval(24,6)[b]}
\put(53,50){\oval(6,26)[r]}
\put(39,45){\oval(12,6)[tl]}
\put(39,51){\oval(12,6)[br]}
% symmetrise
\qbezier (13,80) (15,82) (17,80)
\qbezier (17,80) (19,78) (21,80)
\qbezier (21,80) (23,82) (25,80)
\qbezier (25,80) (27,78) (29,80)
\qbezier (29,80) (31,82) (33,80)
\qbezier (33,80) (35,78) (37,80)
\qbezier (37,80) (39,82) (41,80)
\qbezier (41,80) (43,78) (45,80)
\qbezier (45,80) (47,82) (49,80)
\qbezier (49,80) (51,78) (53,80)
\end{picture}$$
will suffice. It is then a matter of computation (with a computer) to check 
that, in addition to the $5$ obstructions listed in 
Theorem~\ref{theorem_2}, there are two more, namely
$$E^{abcd}-F^{abcd}\quad\mbox{and}\quad J^{abcdef}-M^{abcdef},$$
but that there are no more relations amongst $A,B,C,D,E,F,J,K,L$. Therefore, 
amongst the $11$-dimensional space of covariants of degree $6$, the subspace 
comprising those that vanish in the metrisable case is $8$-dimensional and is
spanned by 
$$\begin{array}{c}
J-M,\quad J-2L,\quad 3J-2C\odot C,\quad J-4K+4A\odot D,\\[4pt]
A\odot E-A\odot F,\quad A\odot F-2A\odot D,\quad B\odot C-2B\odot  B,\quad 
C\odot C-2B\odot C,
\end{array}$$
for example.
\end{proof}

\subsection{Egorov's projective structure}\label{egorov_subsection}
A {\em projective symmetry\/} is a vector field whose local flow maps
unparametrised geodesics to unparametrised geodesics. Such symmetries form a
Lie algebra ${\mathfrak{g}}$ under Lie bracket and, for connected
$3$-dimensional projective structures, we have $\dim{\mathfrak{g}}\leq 15$ with
equality if and only if ${\mathfrak{g}}={\mathfrak{sl}}(4,{\mathbb{R}})$ in
which case the structure is projectively flat, equivalently
$V^{ab}{}_c\equiv0$.

The submaximal dimension for simply-connected $3$-dimensional projective
structures is~$8$. The corresponding projective structure was given by
Egorov~\cite{egorov} and can be represented in local co\"ordinates
$(x^1,x^2,x^3)$ as 
\be
\label{egorov_p}
\nabla_2X^1=\partial_2X^1+x^2X^3,\quad
\nabla_3X^1=\partial_3X^1+x^2X^2,\quad\mbox{else}\quad
\nabla_aX^c=\partial_aX^c,
\ee 
where $\partial_a\equiv\partial/\partial x^a$.
For this connection, the only non-zero components of curvature are 
$$R_{23}{}^1{}_2=-1\qquad R_{32}{}^1{}_2=1$$
so $R_{ab}{}^c{}_d$ is already trace-free. Therefore 
$W_{ab}{}^c{}_d=R_{ab}{}^c{}_d$. Hence
\begin{equation}\label{egorov_curvature}
V^{11}{}_2=-2\quad\mbox{with all other components zero}.\end{equation}
Immediately (\ref{curvature_constraint_revisited}) implies 
$\sigma^{12}=\sigma^{22}=\sigma^{23}=0$ and then we can solve 
(\ref{metrisability_equation}) explicitly:
\begin{equation}\label{explicit_solution}\sigma^{ab}=\left[\begin{array}{ccc}
A-B(x^2)^2+C(x^2)^4&0&B-2C(x^2)^2\\
0&0&0\\
B-2C(x^2)^2&0&4C
\end{array}\right]\end{equation}
for arbitrary constants $A,B,C$. Since all solutions are degenerate, we have
shown: 
\begin{prop} The Egorov projective structure is not metrisable.
\end{prop}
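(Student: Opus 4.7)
The plan is to invoke Theorem~\ref{metrisability}: metrics in the projective class correspond (up to sign and scale) to non-degenerate solutions of the metrisability equation~\eqref{metrisability_equation}, so it suffices to exhibit every solution $\sigma^{ab}$ and check that none is non-degenerate.

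First, I would use the algebraic obstruction~\eqref{curvature_constraint_revisited} to cut the space of candidate solutions down to essentially nothing. Since the only non-zero component of $V^{ab}{}_c$ is $V^{11}{}_2=-2$ from~\eqref{egorov_curvature}, each component of $V^{(ab}{}_d\sigma^{c)d}$ reduces to a constant multiple of one of $\sigma^{12},\sigma^{22},\sigma^{23}$: examining the $(111)$, $(112)$ and $(113)$ components in turn forces $\sigma^{12}=\sigma^{22}=\sigma^{23}\equiv 0$, so the candidate $\sigma^{ab}$ has identically vanishing second row and column.

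Next, I would integrate~\eqref{metrisability_equation} subject to these constraints. The Egorov connection has only the Christoffel symbols $\Gamma^1_{23}=\Gamma^1_{32}=x^2$, so the covariant derivatives $\nabla_a\sigma^{bc}$ are trivial to write down. Writing the equation as $\nabla_a\sigma^{bc}=\delta_a{}^b\mu^c+\delta_a{}^c\mu^b$ and reading off component by component, the vanishing of $\sigma^{12},\sigma^{22},\sigma^{23}$ forces $\mu^a=0$, whence the surviving entries $\sigma^{11},\sigma^{13},\sigma^{33}$ must depend on $x^2$ alone and satisfy the coupled ODEs
\[\partial_2\sigma^{33}=0,\qquad \partial_2\sigma^{13}+x^2\sigma^{33}=0,\qquad \partial_2\sigma^{11}+2x^2\sigma^{13}=0,\]
whose general solution is the three-parameter family displayed in~\eqref{explicit_solution}.

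Finally, the key observation: every solution in this family has its second row and column identically zero, so the matrix has rank at most two and $\det\sigma\equiv 0$ pointwise. By Theorem~\ref{metrisability} no metric in the projective class can arise, and the Egorov structure is therefore not metrisable. The only real work lies in the component-by-component integration in the middle step, but this is routine once the algebraic constraint has reduced the unknowns to three functions of one variable.
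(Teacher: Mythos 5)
Your proof is correct and follows exactly the paper's own route: use the algebraic constraint \eqref{curvature_constraint_revisited} with \eqref{egorov_curvature} to force $\sigma^{12}=\sigma^{22}=\sigma^{23}=0$, integrate \eqref{metrisability_equation} to obtain the three-parameter family \eqref{explicit_solution}, and conclude from the identical vanishing of $\det\sigma$ via Theorem~\ref{metrisability}. The component computations (including the intermediate ODEs and the resulting polynomials in $x^2$) match the paper's solution precisely.
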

\noindent{\em Another Proof.} As soon as the dimension of the solution space to
(\ref{metrisability_equation}) reaches~$3$, the projective structure cannot be
metrisable unless it is projectively flat (it is easy to check that
(\ref{explicit_solution}) solves (\ref{metrisability_equation}) and that the
projective Weyl curvature is non-vanishing without knowing that
(\ref{explicit_solution}) is the general solution). In general, the 
{\em degree of mobility\/} of a metric is the dimension of the solution space
of (\ref{metrisability_equation}) for the associated projective structure and
in $3$ dimensions it can only be $1$, $2$, or $10$ (as shown in~\cite{KM} in
the Riemannian case and \cite{K} in the Lorentzian case (see
\cite{KMMS,Shandra} in the Riemannian setting and \cite {FM} in the Lorentzian
setting for a detailed analysis concerning possible values of the degree of
mobility in all dimensions (in~\cite{FM}, a detailed analysis is conducted
under the assumption that there are at least two metrics in the projective
class whose corresponding Levi-Civita connections are different but if this is
not the case, then this is a sufficient imposition on the projective Weyl
curvature that it must vanish))). Alternatively, Kruglikov and
Matveev~\cite{BSK_VSM} consider the dimension of the space of local projective
symmetries to conclude that the Egorov structure is not metrisable.
Specifically, in $3$ dimensions they show that the dimension of this space is
bounded by $5$ if there is a Riemannian metric inducing the projective
structure and $6$ if there is a Lorentzian metric inducing the projective
structure (whereas, as noted above, the local projective symmetries are
$8$-dimensional for the Egorov structure). \hfill$\square$

\medskip\noindent{\em Yet Another Proof.} 
We consider the projective Weyl curvature of the
Egorov structure in comparison with Lemma~\ref{keylemma}. To make the
comparison, let us use the metric to lower indices:
\begin{equation}\label{hook_symmetry}V_{abc}=2R^d{}_{(a}\epsilon_{b)dc},
\enskip\mbox{which implies that}\enskip V_{(abc)}=0.
\end{equation}
As a consequence of~(\ref{egorov_curvature}), however, $V^{ab}{}_c$ is simple,
i.e.~there are non-zero $X^a$ and $Y_a$ such that $V^{ab}{}_c=X^aX^bY_c$.
Therefore we have $V_{(abc)}=X_{(a}X_{\vphantom{(}b}Y_{c)}\not=0$, in conflict
with~(\ref{hook_symmetry}).
% If the Egorov structure were metrisable then, according to this lemma, in any
% orthonormal frame we should have
% $$V^{11}{}_2=2R^{d1}\epsilon^{1}{}_{d2}=-2R^{31}=-2R^{13}
% =-2R^{d3}\epsilon^{3}{}_{d2}=-V^{33}{}_2,$$
% which na\"{\i}vely seems to be in conflict with~(\ref{egorov_curvature}). Of
% course, one is allowed an arbitrary change of frame in order to resolve this
% conflict but, by the Gram-Schmidt process, an arbitrary real non-singular
% matrix can be written as~$KAN$, where $K$ is orthogonal, $A$~is diagonal, and
% $N$ is strictly upper-triangular with $1$s down the diagonal. The conflict 
% with (\ref{egorov_curvature}) remains and so we have yet another proof that 
% the Egorov structure is not metrisable with a Riemannian metric. To exclude a
% Lorentzian metric one needs to take care that our frame may contain null
% vectors but always the conflict with (\ref{egorov_curvature}) remains.
\hfill$\square$

\medskip
Thus, we have yet another proof, this one just from the Weyl curvature, that
the Egorov projective structure is not metrisable.
On the other hand, for $V^{ab}{}_c$ as in~(\ref{egorov_curvature}), projecting 
$V\odot V\odot \cdots\odot V$ into any component of 
$$\textstyle\bigodot^d(\,\xoo{-4}{1}{2})=\;\xoo{-4d\;{}}{d}{2d}\oplus\cdots$$
other than the first evidently gives zero. Thus, we see that the Weyl 
curvature is sufficiently special that the structure cannot be metrisable but 
that this situation cannot be detected by the vanishing of any projective 
covariant. 

Finally, we remark that, although the Weyl tensor $V^{ab}{}_c$ for the Egorov
structure does not have the form required by Lemma~\ref{keylemma}, it can still
be written as
$$V^{ab}{}_c=2R^{d(a}g^{b)e}\epsilon_{edc},\quad\mbox{where}\enskip
R^{ab}=\left[\begin{array}{ccc}0&0&1\\ 0&0&0\\ 1&0&0\end{array}\right]
\enskip\mbox{and}\enskip 
g^{ab}=\left[\begin{array}{ccc}1&0&0\\ 0&0&0\\ 0&0&0\end{array}\right],$$
which is of the required form save for $g^{ab}$ being degenerate.

\subsection{Newtonian projective structures}\label{newtonian_subsection}
%%%%%%%%%%%%%%%%%%%%%%%%%%%%%%%%%%%%%%%%%%%%%%%%
%% Initially MikE thought the following would %%
%% be inevitable but instad it is false!      %%
%%%%%%%%%%%%%%%%%%%%%%%%%%%%%%%%%%%%%%%%%%%%%%%%
% In this section, we shall find non-metrisable projective structures whose 
% Weyl curvature, nevertheless, has the form 
% $V^{ab}{}_c=2S^{d(a}\epsilon^{b)}{}_{dc}$ for some symmetric 
% tensor~$S^{ab}$, as in Lemma~\ref{keylemma}. 
% Therefore, any obstruction due to Lemma~\ref{keylemma} must vanish. 
In this section we construct some more non-metrisable projective
structures notwithstanding that, again, all our obstructions vanish. Indeed,
any obstruction that depends continuously on the projective structure will
vanish for these {\em Newtonian\/} projective structures because they are
created as limits of metrisable structures as follows. In local co\"ordinates
$(x^1,x^2,x^3)$, consider the metric
$$\epsilon\big((dx^1)^2+(dx^2)^2\big)+\exp(\epsilon f(x^1,x^2))(dx^3)^2$$
where $\epsilon\not=0$ is constant and $f(x^1,x^2)$ is an arbitrary smooth 
function. The corresponding projective structures are metrisable by definition 
but if we let $\epsilon\to0$, then these metric connections have a perfectly 
good limit, namely
\begin{equation}\label{newton}
\textstyle\nabla_3X^1=\partial_3X^1-\frac12(\partial_1f)X^3,\quad
\nabla_3X^2=\partial_3X^2-\frac12(\partial_2f)X^3,\quad\mbox{else}\quad
\nabla_aX^c=\partial_aX^c,\end{equation} 
(whose geodesic equations are Newton's equations for a particle
in the $(x^1,x^2)$-plane moving under the influence of the potential
$f(x^1,x^2)$ with $x^3=$ `time') whereas 
\begin{prop} Unless projectively flat, the Newtonian projective structures
\eqref{newton} are not metrisable.
\end{prop}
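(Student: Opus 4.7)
The plan is to apply the necessary curvature constraint \eqref{curvature_constraint_revisited} to the metrisability equation for the connection \eqref{newton} and show it forces any solution $\sigma^{ab}$ to be degenerate on an open set whenever the projective structure is not flat.

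First I would compute the curvature of the connection~\eqref{newton} directly from its non-zero Christoffel symbols $\Gamma^1_{33}=-\tfrac12\partial_1 f$ and $\Gamma^2_{33}=-\tfrac12\partial_2 f$. A short calculation gives that the only non-zero components are $R_{i3}{}^j{}_3=-\tfrac12 f_{ij}$ for $i,j\in\{1,2\}$, where $f_{ij}:=\partial_i\partial_j f$. Since $\Gamma^c_{ac}\equiv 0$ the connection is special, the Schouten tensor has only the component $\Rho_{33}=-\tfrac14\Delta f$ non-zero, and computing $V^{ab}{}_c=\epsilon^{dea}W_{de}{}^b{}_c$ yields $V^{ab}{}_c=\delta_c{}^3\,V^{ab}$, where
\[ (V^{ab})=\begin{pmatrix} -g & h & 0 \\ h & g & 0 \\ 0 & 0 & 0 \end{pmatrix},\qquad g:=f_{12},\; h:=\tfrac12(f_{11}-f_{22}). \]
In particular, the structure is projectively flat (i.e.\ $V^{ab}{}_c\equiv 0$) if and only if $g\equiv h\equiv 0$.

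Next I would substitute this special form into the pointwise algebraic constraint \eqref{curvature_constraint_revisited}, namely $V^{(ab}{}_d\sigma^{c)d}=0$. Writing $v^c:=\sigma^{c3}$ and using the concentration of $V$ in the third lower index, the constraint reduces to $V^{ab}v^c+V^{bc}v^a+V^{ca}v^b=0$. Choosing the index triples $(1,1,1),(2,2,2),(1,1,3),(1,2,3),(1,1,2),(2,2,1)$ produces the equations $gv^1=gv^2=gv^3=hv^3=0$, $-gv^2+2hv^1=0$, and $gv^1+2hv^2=0$. Elementary inspection shows that at any point where $(g,h)\neq(0,0)$ the only simultaneous solution is $v^1=v^2=v^3=0$.

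Therefore, if the Newtonian structure is not projectively flat, then there is an open set $U\subset M$ on which $(g,h)\neq(0,0)$, and on $U$ every solution of the metrisability equation~\eqref{metrisability_equation} has $\sigma^{a3}\equiv 0$, so $\sigma^{ab}$ is everywhere of rank at most~$2$ on $U$. By Theorem~\ref{metrisability} there is then no metric in the projective class. The only modestly delicate step is in unpacking the symmetrised constraint into the six equations above and noting the redundancies amongst them; once this is done, the remainder is a direct substitution and a two-case argument in $g$ and $h$.
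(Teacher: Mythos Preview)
Your proof is correct and follows essentially the same route as the paper: compute $V^{ab}{}_c$ for the Newtonian connection, observe it has the form $\delta_c{}^3 V^{ab}$ with $V^{ab}$ as in~\eqref{newtonian_curvature}, and then apply the algebraic constraint~\eqref{curvature_constraint_revisited} to force $\sigma^{a3}=0$ wherever $V^{ab}\neq 0$, so that every solution of~\eqref{metrisability_equation} is degenerate on an open set. Your explicit unpacking of the symmetrised constraint into six scalar equations and the two-case analysis in $g,h$ is more detailed than the paper's one-line assertion that $\sigma^{c3}=0$, but the argument is the same.
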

\begin{proof}
Firstly, we compute the curvature of (\ref{newton}) 
$$\begin{array}{llll}
R_{13}{}^1{}_3=-\frac12(\partial_1)^2f
&R_{13}{}^2{}_3=-\frac12\partial_1\partial_2f
&R_{23}{}^1{}_3=-\frac12\partial_1\partial_2f
&R_{23}{}^2{}_3=-\frac12(\partial_2)^2f\\
R_{31}{}^1{}_3=\frac12(\partial_1)^2f
&R_{31}{}^2{}_3=\frac12\partial_1\partial_2f
&R_{32}{}^1{}_3=\frac12\partial_1\partial_2f
&R_{32}{}^2{}_3=\frac12(\partial_2)^2f
\end{array}$$
to discover that $\Rho_{33}=-\frac14\big((\partial_1)^2+(\partial_2)^2\big)f$
with all other components zero and hence that
$$V^{ab}{}_c=\epsilon^{dea}W_{de}{}^b{}_c
=\epsilon^{dea}R_{de}{}^b{}_c-2\epsilon^{bea}\Rho_{ec}$$
is given by
%%%%%%%%%%%%%%%%%%%%%%%%%%%%%%%%%%%%
%% Private computation as follows %%
%%%%%%%%%%%%%%%%%%%%%%%%%%%%%%%%%%%%
% $$\textstyle V^{11}{}_3=\epsilon^{de1}R_{de}{}^1{}_3-2\epsilon^{1e1}\Rho_{e3}
% =2R_{23}{}^1{}_3=-\partial_1\partial_2f$$
% $$\textstyle V^{22}{}_3=\epsilon^{de2}R_{de}{}^2{}_3-2\epsilon^{2e2}\Rho_{e3}
% =2R_{31}{}^2{}_3=\partial_1\partial_2f$$
% $$\textstyle V^{33}{}_3=\epsilon^{de3}R_{de}{}^3{}_3-2\epsilon^{3e3}\Rho_{e3}
% =0$$
% $$\textstyle V^{12}{}_3=\epsilon^{de1}R_{de}{}^2{}_3-2\epsilon^{2e1}\Rho_{e3}
% =2R_{23}{}^2{}_3-2\Rho_{33}
% =\frac12\big((\partial_1)^2-(\partial_2)^2\big)f$$
% $$\textstyle V^{21}{}_3=\epsilon^{de2}R_{de}{}^1{}_3-2\epsilon^{1e2}\Rho_{e3}
% =2R_{31}{}^1{}_3+2\Rho_{33}=\frac12\big((\partial_1)^2-(\partial_2)^2\big)f$$
% $$\textstyle V^{13}{}_3=\epsilon^{de1}R_{de}{}^3{}_3-2\epsilon^{3e1}\Rho_{e3}
% =0$$
% $$\textstyle V^{23}{}_3=\epsilon^{de2}R_{de}{}^3{}_3-2\epsilon^{3e2}\Rho_{e3}
% =0$$
\begin{equation}\label{newtonian_curvature}V^{ab}{}_3=\left[\begin{array}{ccc}
-\partial_1\partial_2f
&\frac12\big((\partial_1)^2-(\partial_2)^2\big)f
&0\\
\frac12\big((\partial_1)^2-(\partial_2)^2\big)f
&\partial_1\partial_2f
&0\\
0&0&0
\end{array}\right]\end{equation}
with all other components zero. If $f(x^1,x^2)=\alpha x^1+\beta x^2+\gamma$,
then $V^{ab}{}_c$ vanishes and the structure is projectively flat. Otherwise
the constraint (\ref{curvature_constraint_revisited}) implies that
$\sigma^{c3}=0$ for all~$c$. Already, all solutions to
(\ref{metrisability_equation}) are degenerate so the structure is not 
metrisable. In fact, one can go on to check that 
$$\sigma^{ab}=\left[\begin{array}{ccc}
A&B&0\\ B&C&0\\ 0&0&0\end{array}\right]$$
is the general solution of (\ref{metrisability_equation}) for arbitrary 
constants $A,B,C$. As for the Egorov example, this $3$-dimensional space of 
solutions also precludes metrisability.
\end{proof}
\noindent{\em Yet Another Proof.} As for the Egorov example, the form of the
projective Weyl curvature (\ref{newtonian_curvature}) conflicts with
Lemma~\ref{keylemma} to provide yet another proof. This time $V^{ab}{}_c$ has
the form $X^{ab}Y_c$ for non-zero $X^{ab}$ and $Y_a$ (with $X^{ab}$ symmetric
but never simple). Lowering the indices with the purported metric gives
$V_{(abc)}=X_{(ab}Y_{c)}\not=0$, contrary to~(\ref{hook_symmetry}).
\hfill$\square$

Finally, we remark that, although the Weyl tensor $V^{ab}{}_c$ for the
Newtonian structure does not have the form required by Lemma~\ref{keylemma}, it
can still be written as
$$V^{ab}{}_c=2R^{d(a}g^{b)e}\epsilon_{edc},\quad\mbox{where}\enskip
R^{ab}=-\frac12\left[\begin{array}{ccc}
\partial_1\partial_1f&\partial_1\partial_2f&0\\ 
\partial_2\partial_1f&\partial_2\partial_2f&0\\ 0&0&0\end{array}\right]
\enskip\mbox{and}\enskip 
g^{ab}=\left[\begin{array}{ccc}1&0&0\\ 0&1&0\\ 0&0&0\end{array}\right],$$
which is of the required form save for $g^{ab}$ being degenerate.

\subsection{A Weyl metrisable but not metrisable projective structure}
Recall that a Weyl structure on $M$ consists of a conformal structure
$[g]$ together with a torsion-free connection $D$ that is compatible
with the conformal structure in a sense that
\[
Dg=\omega\otimes g
\]
for $g\in[g]$ and some $1$-form $\omega$, this compatibility condition being 
invariant under the transformation
\begin{equation}\label{weyl_freedom}
g\mapsto\hat{g}\equiv\Theta^2 g, \;\;\omega\mapsto \omega+2d(\ln{\Theta}),
\end{equation}
where $\Theta$ is a non-zero function on $M$. For any metric $g$ in the 
conformal class, the $1$-form $\omega$ determines the connection~$D$.

Consider a Lorentizian Weyl structure on the three-dimensional Heisenberg 
group~\cite{p_t}
\[
g=(dx^1)^2-(dx^2)^2+(dx^3-x^1dx^2)^2, \quad \omega=2(dx^3-x^1dx^2).
\]
This Weyl structure is Einstein--Weyl: the symmetrised Ricci tensor
of $D$ is proportional to $g$. Let $[D]$ be the projective structure
defined by $D$.

We find that the obstruction ${Q_{ab}}^c$ from Theorem~\ref{theorem_3} does not
vanish. For example, ${Q_{22}}^1=x^1$. The determinant $T$ from 
Theorem~\ref{theorem_1} also
does not vanish. A convenient way to present $T$ is to regard it as a ternary
sextic. Setting $X_{a}=(X, Y, Z)$ we find
\[
X_aX_bX_cX_dX_eX_fT^{abcdef}=Z^2(X+Y+Zx^1)^2(X-Y-Zx^1)^2
\]
up to a non-zero multiplicative constant.

{From} either of these obstructions, we therefore conclude that the projective
structure $[D]$ is not metrisable. It is nevertheless Weyl metrisable by
construction. In dimension two all projective structures are locally Weyl
metrisable \cite{mettler}. We expect this not to be the case in dimension
three, where, up to diffeomorphism, a general real-analytic projective
structure depends on $12$ arbitrary functions of $3$ variables, but a Weyl
structure only depends on $5$ such functions. Characterising projective
connections that are Weyl metrisable is an interesting open problem, which we
do not pursue here. In general, we also do not know which Weyl metrisable
structures are genuinely metrisable. If the Einstein--Weyl equations hold,
however, then we have a satisfactory answer as follows.

\subsection{Einstein-Weyl projective structures}
Consider a $3$-dimensional Weyl structure $(D, [g])$ as outlined at the 
beginning of the previous section.
% 
% represented by a metric $g_{ab}$ and
% a one-form $\omega_a$ subject to 
% \[
% g\mapsto \Omega^2 g, \;\;\omega\mapsto \omega+2d(\ln{\Omega}).
% \]
% A tensor object $T$ which transforms like $T\mapsto\Omega^m T$ when
% $g\mapsto \Omega^2 g$ is said to be conformally invariant of weight $m$.
% Therefore tensors
% \[
% g_{ab},\quad g^{ab}, \quad \mbox{and}\quad  \epsilon_{abc} 
% \;\mbox{(volume form)}
% \]
% have weights $2, -2$ and $3$. Then $\epsilon^{abc}$ has weight 
% $-3=3+3\times(-2)$   
% and so on. Note that the notions of conformal weights and projective weights
% are different.
In general, the Ricci tensor of $D$ contains both symmetric and skew parts, the
latter being proportional to $\nabla_{[a}\omega_{b]}$. The $2$-form 
$F_{ab}\equiv\nabla_{[a}\omega_{b]}$ is an invariant of the Weyl structure, 
often called the {\em Faraday form\/}. The Weyl structure is
called {\em Einstein--Weyl\/} if 
\be
\label{ew_eq}
\Phi_{ab}=0, 
\ee 
where $\Phi_{ab}$ is the symmetrised trace-free part of the
Ricci tensor of~$D$ (noting that removing the trace of a symmetric tensor
depends only on the conformal class~$[g]$).
\begin{theo}
Let $(D, [g])$ be an Einstein--Weyl structure in dimension 3, and  
let $[D]$ be the projective structure defined by $D$. 
Then $[D]$ is metrisable if and only if its Faraday form $F_{ab}$ vanishes. 
\end{theo}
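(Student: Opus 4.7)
If $F_{ab}=0$ then $\omega$ is closed, so locally $\omega=2\,d\log\Theta$ for some positive function $\Theta$. Applying the Weyl gauge freedom (\ref{weyl_freedom}) with $\hat g=\Theta^{-2}g$ transforms the data to $(\hat g,\hat\omega)$ with $\hat\omega=0$, whence $D\hat g=0$, so $D=\nabla^{\hat g}$ and $[D]$ contains this Levi-Civita connection.

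\textbf{Hard direction.} Suppose $[D]$ is metrisable. The strategy is first to probe the metrisability equation (\ref{metrisability_equation}) with the natural candidate built from a conformal representative $g\in[g]$, observing that $F_{ab}=0$ is sufficient for a solution of this particular form to exist; then to use the Einstein--Weyl hypothesis to rule out essentially different solutions.

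For Step 1, I would try $\sigma^{bc}=\mu g^{bc}$ where $\mu$ is a projective density of weight $-2$. The identity $D_ag^{bc}=-\omega_ag^{bc}$ (which follows from $Dg=\omega\otimes g$) gives
\[
(D_a\sigma^{bc})_\circ = \nu_a g^{bc} - \tfrac{1}{4}\bigl(\delta_a{}^b\nu^c + \delta_a{}^c\nu^b\bigr),\qquad \nu_a := D_a\mu-\mu\omega_a,
\]
and tracing with $g_{bc}$ forces $\nu_a=0$, i.e.~$d\log\mu=\omega$. This ODE for $\mu$ is locally solvable iff $\omega$ is closed, i.e.~iff $F_{ab}=0$; so this particular family of solutions exists iff $F=0$.

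For Step 2, suppose $\sigma^{bc}$ is an arbitrary non-degenerate solution and $h^{bc}=(\det\sigma)\sigma^{bc}$ the induced metric, with $\nabla^h$ obtained from $D$ by a projective change $\Upsilon$. The trace $R^D_{ab}{}^c{}_c$ can be computed from $D\,\mathrm{vol}_g=\tfrac{3}{2}\omega\otimes\mathrm{vol}_g$, giving $\beta^D_{ab}=-\tfrac{3}{4}F_{ab}$. Since $\nabla^h$ is special ($\beta^{\nabla^h}=0$), the projective-change identity $\beta^{\nabla^h}_{ab}=\beta^D_{ab}+2D_{[a}\Upsilon_{b]}$ yields $d\Upsilon=\tfrac{3}{4}F$, so $\Upsilon-\tfrac{3}{8}\omega$ is closed. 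A conformal rescaling $g\mapsto e^{2\psi}g$ within $[g]$ shifts $\omega\mapsto\omega+2d\psi$ but leaves $D$, $h$ and $\Upsilon$ intact, so by suitable choice of $\psi$ we may arrange $\Upsilon=\tfrac{3}{8}\omega$. Combining the symmetric Schouten transformation with the Einstein--Weyl identity $\Rho^D_{(ab)}=\tfrac{R^D}{6}g_{ab}$ then expresses $R^h_{ab}$ explicitly in terms of $g,\omega$ and $R^D$; the plan is to exhibit this as forcing $h$ to be conformally related to $g$, which reduces to Step 1 and therefore to $F_{ab}=0$.

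\textbf{Main obstacle.} The rigidity step---that the resulting formula for the Ricci of $h$, together with the fact that $h$ itself must be projectively equivalent to $D$, pins $h$ to the conformal class $[g]$---is the technical heart of the argument. It requires integrating the algebraic obstruction of Theorem~\ref{prolongation_obstruction_revisited} against the Einstein--Weyl constraint, and exploiting that in three dimensions the Ricci tensor essentially reconstructs the metric; I expect the bulk of the work to lie in the careful bookkeeping of these integrability conditions.
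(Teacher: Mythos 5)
Your ``easy'' direction (if $F_{ab}=0$, gauge $\omega$ away and $D$ becomes a Levi-Civita connection) is exactly the paper's argument and is fine. The hard direction, however, has a genuine gap. Your Step~1 only shows that solutions of the metrisability equation of the special conformal form $\sigma^{bc}=\mu g^{bc}$ exist iff $F_{ab}=0$; this says nothing about metrisability via a solution $\sigma^{bc}$ that is \emph{not} proportional to $g^{bc}$. Everything therefore rests on your Step~2 rigidity claim --- that any non-degenerate solution forces the induced metric $h$ into the conformal class $[g]$ --- and that step is only a plan: you compute $d\Upsilon=\tfrac34 F$ and propose to ``exhibit'' the Ricci identity as pinning $h$ to $[g]$, but you never carry out the argument, and you acknowledge as much in your ``Main obstacle'' paragraph. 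As written, the non-trivial implication (metrisable $\Rightarrow F=0$) is not proved. (There is also a smaller wrinkle in Step~1: $\mu$ is a projective density of weight $-2$, so the equation $D_a\mu=\mu\omega_a$ needs to be interpreted in a fixed scale before you can conclude ``solvable iff $\omega$ closed''; this is repairable but should be said.)

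The paper takes a much more direct route that avoids any classification of solutions: it computes the Weyl curvature of $D$ in the form
$V^{ab}{}_c=2\Phi^{d(a}\epsilon^{b)}{}_{dc}-\tfrac12\delta_c{}^{(a}f^{b)}+g^{ab}f_c$,
where $F_{ab}=\epsilon_{abc}f^c$, and then evaluates the quadratic obstruction $Q_{ab}{}^c$ of Theorem~\ref{theorem_3} on this tensor. Under the Einstein--Weyl hypothesis $\Phi_{ab}=0$ one is left with $Q_{ab}{}^c=f_{(a}\epsilon_{b)d}{}^cf^d$, which is non-zero whenever $f^a\neq0$. Since $Q_{ab}{}^c$ must vanish for any metrisable projective structure, metrisability forces $f^a=0$, i.e.\ $F_{ab}=0$. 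If you want to complete your own approach you would need to supply the rigidity argument in full; alternatively, invoking the already-established algebraic obstruction $Q_{ab}{}^c$ replaces all of Step~2 with a pointwise computation.
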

\begin{proof}
% Let $W_{ab}{}^c{}_d$ be the projective Weyl curvature of $D$, and let
% $\Phi_{ab}$ be symmetrised Ricci tensor of $D$ with its trace removed.
% Set
% \[
% {V^{ab}}_c=\epsilon^{dea}W_{de}{}^b{}_c, \quad Q_{ab}{}^c=\epsilon_{pq(a}V^{pr}{}_{b)}V^{qc}{}_r.
% \]
% Note that $V$ and $Q$ are of conformal weights $-3$.
Let $V^{ab}{}_c$ be as usual~(\ref{V_from_W}). 
A straightforward but cumbersome calculation yields
% \be\label{V_weyl}
\[\textstyle
{V^{ab}}_c=2 \Phi^{d(a}{\epsilon^{b)}}_{dc}-\frac{1}{2} 
\delta_c{}^{(a}f^{b)}+g^{ab}f_c,\]
%={(V_R)^{ab}}_c+{(V_F)^{ab}}_c,
% \ee
% where $f^a$ is a co-vector of conformal weight $-3$ defined by
where $f^a$ is the vector field defined by
\[
F_{ab}=\epsilon_{abc}f^c,
\]
and indices are lowered and raised by any representative metric $g_{ab}$ 
from the conformal class and its inverse~$g^{ab}$.
An even more cumbersome calculation gives
% \be\label{Q_weyl}
\[{Q_{ab}}^c=
\Phi_{ab}f^c+2{\Phi^{c}}_{(a}f_{b)}-2{\delta^c}_{(a}{\Phi^d}_{b)}f_d
+2g_{ab}\Phi^{cd}f_d+f_{(a}{\epsilon_{b)d}}^c f^d,\]
% \ee
%\[
%Q=V_R V_R+V_R V_F+V_FV_R +V_F V_F,
%\]
%where
%\begin{eqnarray*}
%V_R V_R&=&0,\\ 
%V_F V_R&=&3 f_{(b}{R^{c}}_{a)}, \\
%V_R V_F&=&
%R_{ab}f^c-\frac{1}{2}{R^{c}}_{(a}f_{b)}
%+\frac{1}{4}{\delta^c}_{(a}{R^d}_{b)}f_d+\frac{3}{4}g_{ab}R^{cd}f_d\\
%\frac{1}{4}R_{ab}f^c+\frac{1}{4}{R^{c}}_{(a}f_{b)}
%+{\delta^c}_{(a}{R^d}_{b)}f_d+
%\frac{3}{4}{\epsilon^c}_{p(b}\epsilon_{a)dq}R^{pd} f^q
%V_F V_F&=&-f_{(b}{\epsilon_{a)d}}^c f^d.
%\end{eqnarray*}
where $Q_{ab}{}^c$ is our usual quadratic obstruction (\ref{this_is_Q}) to
metrisability. If the Einstein--Weyl equations (\ref{ew_eq}) hold then
$Q_{ab}{}^c$ vanishes if and only if $f^a=0$, which happens if and only if
$\omega$ is locally a gradient. But in this case the Weyl connection is the
Levi-Civita connection of a conformally rescaled metric~$\hat{g}$, so it is
metrisable (as it is metric).\end{proof}

In fact, if the Faraday form vanishes, and locally we choose a metric
connection in the projective class according to (\ref{weyl_freedom}), then the
Einstein--Weyl equations (\ref{ew_eq}) revert to the Einstein equations. Since
we are in $3$ dimensions, the Einstein equations imply that the metric is
constant curvature. Therefore, the only metrisable Einstein--Weyl structures
in $3$ dimensions are projectively flat. 

\section{Path geometries and systems of ODEs}
\label{section_4}
A convenient way to exhibit examples of projective structures on 
$U^{\mathrm{open}}\subseteq{\mathbb{R}}^3$ is to use an
equivalent definition of a path geometry in $3$ dimensions as an equivalence
class of systems of two second order ordinary differential equations
~\cite{CDT13} 
\be\label{system}
y''=F(x,y,z,y',z'), \quad z''=G(x,y,z,y',z'),
\ee
where two systems are regarded as equivalent if they can be mapped into each
other by a change of dependent and independent variables 
$(x,y,z)\mapsto(\ov{x}(x,y,z),\ov{y}(x,y,z),\ov{z}(x,y,z))$.
An integral curve of (\ref{system}) is, for sufficiently regular functions $F,
G$, specified uniquely by a point and a direction in $U$.

It is relatively straightforward to characterise 2nd order systems
(\ref{system}) that give rise to projective path geometries \cite{casey,Fels}:
set $y^i=(y, z)$, $p^i=(y', z')$ and $F^i=(F, G)$, where the indices $i, j, k,
\dots $ take values $2, 3$. The necessary and sufficient conditions for the
integral curves of (\ref{system}) to be unparametrised geodesics of a
torsion-free connection on $TU$ are \cite{Fels} 
\be
\label{fels}
{S^{i}}_{(jkl)}=0, \quad\mbox{where}\quad
{S^i}_{jkl}=\frac{\p^3 F^i}{\p p^j\p p^k\p p^l}
-\frac{3}{4}\frac{\p^3 F^m}{\p p^m\p p^j\p p^k}\delta_l{}^i.
\ee

To establish this result it is enough to consider the geodesic equations for a
given $\nabla$, and eliminate the affine parameter $s$ between the three
equations
\[
\frac{d^2 x^a}{d s^2}+\Gamma_{bc}{}^a\frac{d x^b}{d s}\frac{d x^c}{d s} =0,
\]
where $x^a=(x, y, z)$. This yields (\ref{system}), with
\be
\label{fels_ode}
F^i=A_{jk}p^ip^jp^k+{B^i}_{jk}p^jp^k+{C^i}_jp^j+D^i,
\ee
where 
\[
A_{ij}=\Gamma_{ij}{}^1, \quad 
{B^i}_{jk}=2\Gamma_{1(j}{}^1\delta_{k)}{}^i-\Gamma_{jk}{}^i, \quad
{C^i}_j=\Gamma_{11}{}^1\delta_j{}^i-2\Gamma_{1j}{}^i, \quad 
D^i=-\Gamma_{11}{}^i.
\]
Note that the expressions for $A, B, C, D$ are invariant under
(\ref{equivalence}). Conversely, imposing (\ref{fels}) on system (\ref{system})
yields (\ref{fels_ode}) as in~\cite{casey}. For example the Egorov projective
structure (\ref{egorov_p}) corresponds to a system
\[
y''=2y(y')^2z', \quad z''=2yy'(z')^2.
\]
Expressing any of the projective invariants in this article, such
as~(\ref{MN}), in terms of $F$, $G$, and their derivatives gives 
point invariants of system (\ref{system}).


\begin{thebibliography}{XX}

\bibitem{beastwood} Baston, R. J. and Eastwood, M. G. (1989)
\emph{The Penrose Transform: its Interaction with Representation Theory},
Oxford University Press 1989.
 
\bibitem{BCEG} Branson, T. P., \v{C}ap, A., Eastwood, M. G., and Gover, A. R. 
(2006)
Prolongations of geometric overdetermined systems,
Internat. J. Math. {\bf 17}, 641--664.

\bibitem{BDE} Bryant, R. L., Dunajski, M., and Eastwood, M. G. (2009) 
Metrisability of two-dimensional projective structures,
J. Differential Geometry {\bf 83}, 465--499.

\bibitem{parabook} \v{C}ap, A. and Slov\'ak, J. (2009)
\emph{Parabolic Geometries I: Background and General Theory},
American Mathematical Society 2009.

\bibitem{casey} Casey, S. (2013) 
{\em On the Structure of Path Geometries and Null
Geodesics in General Relativity.} 
PhD Thesis, University of Cambridge.

\bibitem{CDT13} Casey, S., Dunajski, M. and Tod, K. P. (2013)
Twistor geometry of a pair of second order ODEs {\tt{arXiv:1203.4158. }}
Comm. Math. Phys. { \bf 321}, 681--701.

\bibitem{clifford} Clifford, W. K. (1881)
\emph{Mathematical Fragments, being facsimiles of his unfinished papers 
relating to the Theory of Graphs},
MacMillan and Co.~1881.

\bibitem{East_Mat}  Eastwood, M. G. and  Matveev, V. S. (2007)
Metric connections in projective differential geometry, in
\emph{Symmetries and Overdetermined Systems of Partial Differential Equations},
IMA Volumes in Mathematics and its Applications 144, 
Springer Verlag 2007, pp.~339--350.

\bibitem{egorov} Egorov, I. P. (1951)
Collineations of projectively connected spaces (Russian),
Doklady Akad. Nauk SSSR (N.S.) {\bf 80},  709--712.

\bibitem{FM} Fedorova, A. and Matveev, V. S. (2014)
Degree of mobility for metrics of Lorentzian signature and parallel
(0,2)-tensor fields on cone manifolds, 
Proc. Lond. Math. Soc. {\bf 108}, 1277--1312.

\bibitem{FandH} Fulton, W. and Harris, J. D. (1991)
\emph{Representation Theory, a First Course},
Springer 1991.

\bibitem{Fels} Fels, M. (1995)
The equivalence problem for systems of second-order ordinary differential 
equations, 
Proc. London Math. Soc. {\bf 71}, 221--240.

\bibitem{HSSS} Hammerl, M., Somberg, P., Sou\v{c}ek, V., and \v{S}ilhan, J. 
(2012)
Invariant prolongation of overdetermined PDEs in projective, conformal, 
and Grassmannian geometry,
Ann. Global Anal. Geom. {\bf 42}, 121--145. 

\bibitem{K} Kiosak, V. A. (2002)
\emph{Geodesic Mappings of Riemannian Spaces}, 
PhD Dissertation, University of Olomouc 2002.

\bibitem{KMMS} Kiosak, V. A., Matveev, V. S., Mike\v{s}, J., and Shandra, I.G.
(2010) 
On the degree of geodesic mobility for Riemannian metrics, 
Math Notes {\bf 87}, 628--629.

\bibitem{KM} Kiosak, V. A. and Mike\v{s}, J. (1986) 
On the degree of mobility of Riemannian spaces with respect to geodesic 
mappings, 
Geometriya Pogruzh. Mnogoobraz., Moscow 1986, pp.~35--39.

\bibitem{BSK_VSM} Kruglikov, B. S. and  Matveev, V. S. (2007)
Submaximal metric projective and metric affine structures,
Differential Geom. Appl. 33 (2014), suppl., 70--80.

\bibitem{LiE} LiE program, software project headed by A.~M.~Cohen, now
maintained by M.~A.~A. van Leeuwen. Version 2.2.2 available at
\verb!http://www-math.univ-poitiers.fr/~maavl/LiE/!
 
\bibitem{Liouville} Liouville, R. (1889)
Sur les invariants de certaines \'equations diff\'erentielles et
sur leurs applications,
Jour. de l'Ecole Politechnique {\bf Cah.~59}, 7--76.

\bibitem{mettler} Mettler, T. (2014)
Weyl metrisability of two-dimensional projective structures,
Math. Proc. Camb. Phil. Soc. {\bf 156}, 99--113.

\bibitem{M} Mike\v{s}, J. (1996)
Geodesic mappings of affine-connected and Riemannian spaces,
J. Math. Sci. {\bf 78}, 311--333.

\bibitem{nurowski} Nurowski, P. (2012)
Projective versus metric structures, 
J. Geom. Phys. {\bf 62}, 657--674.

\bibitem{p_t} Pedersen, H. and Tod, K. P. (1993)
Three-dimensional Einstein-Weyl geometry,
Adv. Math. {\bf 97}, 74--109.

\bibitem{OT} Penrose, R. and Rindler, W. (1984)
\emph{Spinors and Space-time I},
Cambridge University Press 1984.

\bibitem{Shandra} Shandra, I. G. (2000)
On the geodesic mobility of Riemannian spaces, 
Math. Notes {\bf 68}, 528--532.

\bibitem{S} Spencer, D. C. (1969)
Overdetermined systems of linear partial differential equations,
Bull. Amer. Math. Soc. {\bf 75}, 179--239.

\bibitem{weyl} Weyl, H. (1939)
\emph{The Classical Groups, their Invariants and Representations},
Princeton University Press 1939.

\end{thebibliography}
\end{document}